\newtheorem{theorem}{Theorem}[section]
\newtheorem{definition}[theorem]{Definition}
\newtheorem{lemma}[theorem]{Lemma}
\newtheorem{proposition}[theorem]{Proposition}
\newtheorem{remark}[theorem]{Remark}
\newenvironment{proof}[1][Proof]{\textbf{#1.} }{\hfill\rule{0.5em}{0.5em}}
{\catcode`\@=11\global\let\AddToReset=\@addtoreset
\AddToReset{equation}{section}

\AddToReset{theorem}{section}

\def\nc{\newcommand}

\def\lam{\lambda}

 \def\Om{\Omega}

\nc\pa{\partial}

\nc\CC{\mathbb{C}}
\nc\RR{\mathbb{R}}
\nc\QQ{\mathbb{Q}}
\nc\ZZ{\mathbb{Z}}
\nc\NN{\mathbb{N}}

\begin{document}
\title{Existence and regularity estimates for quasilinear equations with measure data: the case $1<p\leq \frac{3n-2}{2n-1}$ }
	\author{
	{\bf Quoc-Hung Nguyen\thanks{E-mail address: qhnguyen@shanghaitech.edu.cn, ShanghaiTech University, 393 Middle Huaxia Road, Pudong,
			Shanghai, 201210, China.} ~ and~ Nguyen Cong Phuc\thanks{E-mail address: pcnguyen@math.lsu.edu, Department of Mathematics, Louisiana State University, 303 Lockett Hall,
			Baton Rouge, LA 70803, USA. }}}
\date{}  
\maketitle
\begin{abstract}
We obtain existence and global regularity estimates for gradients of solutions to quasilinear elliptic equations  with measure data whose prototypes are of the form 
$-{\rm div} (|\nabla u|^{p-2} \nabla u)= \delta\, |\nabla u|^q +\mu$  	in a bounded main $\Om\subset\RR^n$ potentially with non-smooth boundary. Here either $\delta=0$ or $\delta=1$, $\mu$ is 
a finite signed Radon measure  in $\Omega$, and $q$ is of linear or super-linear growth, i.e., $q\geq 1$. Our main concern is to extend earlier results to  the strongly singular case  	$1<p\leq \frac{3n-2}{2n-1}$. In particular, in the case $\delta=1$ which corresponds to a Riccati type equation, we settle the question of solvability that has been raised for some time in the literature. 
\medskip

\medskip

\medskip

\noindent MSC2010: primary: 35J60, 35J61, 35J62; secondary: 35J75, 42B37.

\medskip

\noindent Keywords: quasilinear equation; Riccati type equation; measure data; good-$\lambda$  inequality; Muckenhoupt-Wheeden type inequality; weighted norm inequality; capacity.
\end{abstract}   
                  
\tableofcontents
									
 \section{Introduction and main results} 
 
 This paper can be viewed as a continuation of our earlier work \cite{QH4, 55Ph2} in which we studied  gradient regularity  
 of solutions to  quasilinear elliptic equations with measure data
 \begin{eqnarray}\label{5hh070120148}
 \left\{ \begin{array}{rcl}
 -{\rm div}(A(x, \nabla u))&=& \mu \quad \text{in} ~\Omega, \\
 u&=&0  \quad \text{on}~ \partial \Omega,
 \end{array}\right.
 \end{eqnarray} 											
 and applied it to obtain sharp existence results for the  Riccati type equation
 \begin{eqnarray}\label{Riccati}
 \left\{ \begin{array}{rcl}
 -{\rm div}(A(x, \nabla u))&=& |\nabla u|^q + \mu \quad \text{in} ~\Omega, \\
 u&=&0  \quad ~~~~~~~~~~~\text{on}~ \partial \Omega.
 \end{array}\right.
 \end{eqnarray} 																																						
 Here  $\Omega$ is a bounded open subset of $\mathbb{R}^{n}$, $n\geq2$, and $\mu$ is  a finite signed Radon measure  in $\Omega$. The principal operator 
 ${\rm div}(A(x, \nabla u))$ is modeled after the $p$-Laplacian  defined by $\Delta_p u:= {\rm div } (|\nabla u|^{p-2}\nabla u)$. In the papers 
 \cite{55Ph2, 55Ph2-2} and \cite{QH4}  the case $2-\frac{1}{n}<p\leq n$ and the case $\frac{3n-2}{2n-1}<p\leq 2-\frac{1}{n}$ were considered, respectively.  
  In this paper, we consider the remaining `strongly singular'  case $1<p\leq \frac{3n-2}{2n-1}$, which eventually settle the question of solvability (raised in \cite[pages 13--14]{VHV}) for   
  \eqref{Riccati}  for all $1<p\leq n$ and $q\geq 1$.

More precisely, in \eqref{5hh070120148}-\eqref{Riccati}, the nonlinearity  $A:\mathbb{R}^n\times \mathbb{R}^n\to \mathbb{R}^n$ is a Carath\'eodory vector valued function, i.e., $A(x,\xi)$ is measurable in $x$ and continuous with respect to $\xi$ for a.e. $x$. Moreover, for a.e. $x$, $A(x,\xi)$ is continuously differentiable 
in $\xi$ away from the origin and																			satisfies 
                                       \begin{align}
                                       \label{condi1}| A(x,\xi)|\le \Lambda |\xi|^{p-1},\quad | \nabla_\xi A(x,\xi)|\le \Lambda |\xi|^{p-2},
                                       \end{align}
                                        \begin{align}
                                       \label{condi2}  \langle \nabla_\xi A(x,\xi)\eta,\eta\rangle\geq \Lambda^{-1} |\eta|^2|\xi|^{p-2},
                                       \end{align}
                                          for every $(\xi,\eta)\in \mathbb{R}^n\times \mathbb{R}^n\backslash\{(0,0)\}$ and a.e. $x\in \mathbb{R}^n$, where  $\Lambda$ is a  positive constant.

																					As for $p$ in \eqref{condi1}-\eqref{condi2}, we shall restrict ourselves to the range: 
                                          \begin{align*}
                                         1<p\leq  \frac{3n-2}{2n-1}.
                                          \end{align*}     
                                             
We shall also require that $A(x,\xi)$ satisfy a smallness condition of BMO type in the $x$-variable. Such a condition is called the $(\delta, R_0)$-BMO condition defined below (see, e.g., \cite{BW2, KuMi12, 55Ph2}). This  condition allows $A(x, \xi)$ has discontinuity in $x$ and it can be used as a substitute for the Sarason \cite{Sa} VMO  condition.

\begin{definition} 
 We say that $A({x, \xi})$ satisfies a $(\delta, R_0)$-BMO condition for some $\delta, R_0>0$  if
\begin{equation*}
[A]_{R_0}:=\mathop {\sup }\limits_{y\in \mathbb{R}^n,0<r\leq R_0}\fint_{B_r(y)}\Theta(A,B_r(y))(x)dx \leq \delta,
\end{equation*}         
where 
\begin{equation*}
\Theta(A,B_r(y))(x):=\mathop {\sup }\limits_{\xi\in\mathbb{R}^n\backslash\{0\}}\frac{|A(x,\xi)-\overline{A}_{B_r(y)}(\xi)|}{|\xi|^{p-1}},
\end{equation*}
and $\overline{A}_{B_r(y)}(\xi)$ denotes the average of $A(\cdot,\xi)$ over the ball $B_r(y)$, i.e.,
\begin{equation*}
\overline{A}_{B_r(y)}(\xi):=\fint_{B_r(y)}A(x,\xi)dx=\frac{1}{|B_r(y)|}\int_{B_r(y)}A(x,\xi)dx.
\end{equation*}                                                              
\end{definition}

As far as the regularity of the boundary of $\Om$ is concerned,   we  require that it be sufficiently flat in the sense of Reifenberg  \cite{55Re}.
 Namely, at each
boundary point and every scale, we ask that the boundary of $\Omega$  be trapped between two hyperplanes separated by a distance that
depends on the scale. This class of domains includes $C^1$ domains and  Lipschitz domains with sufficiently small Lipschitz constants (see \cite{Tor}).  Moreover, they also include certain domains with fractal boundaries and thus allow for a wide range of potential applications.    
\begin{definition}
 Given $\delta\in (0, 1)$ and $R_0>0$, we say that $\Omega$ is a $(\delta, R_0)$-Reifenberg flat domain if for every $x\in \partial \Omega$
 and every $r\in (0, R_0]$, there exists a
 system of coordinates $\{ z_{1}, z_{2}, \dots,z_{n}\}$,
 which may depend on $r$ and $x$, so that  in this coordinate system $x=0$ and that
\[
B_{r}(0)\cap \{z_{n}> \delta r \} \subset B_{r}(0)\cap \Omega \subset B_{r}(0)\cap \{z_{n} > -\delta r\}.
\]
\end{definition}

  In this paper, all solutions of   \eqref{5hh070120148} and \eqref{Riccati} with a finite signed measure $\mu$ in $\Omega$ will be understood in the     renormalized sense (see \cite{11DMOP}). For $\mu\in\mathfrak{M}_b(\Omega)$ (the set of finite signed measures in $\Omega$), we will tacitly extend it by zero to $\Omega^c:=\mathbb{R}^n\setminus\Omega$.	We let $\mu^+$ and $\mu^-$  be the positive and negative parts, respectively, of a measure $\mu\in\mathfrak{M}_b(\Omega)$. We denote by $\mathfrak{M}_0(\Omega)$ the space of finite signed measures in $\Omega$ which are absolutely continuous with respect to the capacity $c^{\Omega}_{1,p}$. Here  $c^{\Omega}_{1,p}$ is the $p$-capacity defined  for each compact set $K\subset\Omega$ by
  \begin{equation*}
  c^{\Omega}_{1,p}(K)=\inf\left\{\int_{\Omega}{}|{\nabla \varphi}|^pdx:\varphi\geq \chi_K,\varphi\in C^\infty_c(\Omega)\right\},
  \end{equation*}
	where $\chi_{K}$ is the characteristic function of the set $K$.
	  We also denote by $\mathfrak{M}_s(\Omega)$ the space of finite signed measures in $\Omega$ concentrated on a set of zero $c^{\Omega}_{1,p}$-capacity. It is known that  any $\mu\in\mathfrak{M}_b(\Omega)$ can be written  uniquely  in the form $\mu=\mu_0+\mu_s$ where $\mu_0\in \mathfrak{M}_0(\Omega)$ and $\mu_s\in \mathfrak{M}_s(\Omega)$ (see \cite{FS}).
  It is also known  that any  $\mu_0\in \mathfrak{M}_0(\Omega)$ can be written in the form $\mu_0=f-{\rm div}( F)$ where $f\in L^1(\Omega)$ and $F\in L^{\frac{p}{p-1}}(\Omega,\mathbb{R}^n)$.
  
  For $k>0$, we define the usual  two-sided truncation operator $T_k$ by
	$$T_k(s)=\max\{\min\{s,k\},-k\}, \qquad s\in\mathbb{R}.$$ 
	
	For our purpose, the following notion of gradient is needed.
	If $u$ is a measurable function defined  in $\Omega$, finite a.e., such that $T_k(u)\in W^{1,p}_{loc}(\Omega)$ for any $k>0$, then there exists a measurable function $v:\Omega\to \mathbb{R}^n$ such that $\nabla T_k(u)= v \chi_{\{|u|< k\}} $ 
  a.e. in $\Omega$  for all $k>0$ (see \cite[Lemma 2.1]{bebo}). In this case, we define the gradient $\nabla u$ of $u$ by $\nabla u:=v$. It is known that  $v\in L^1_{loc}(\Omega, \mathbb{R}^n)$ if and only if  $u\in W^{1,1}_{loc}(\Omega)$ and then $v$ is the usual weak gradient of $u$. On the other hand, for $1<p\leq 2-\frac{1}{n}$, by looking at the fundamental solution we see that in general distributional solutions of \eqref{5hh070120148}  may not even belong to $u\in W^{1,1}_{loc}(\Omega)$.\\

	The notion of renormalized solutions is a generalization of that of entropy solutions introduced in \cite{bebo} and \cite{BGO}, where the right-hand side is assumed to be  in $L^1(\Omega)$ or in  $\mathfrak{M}_{0}(\Omega)$.
Several equivalent definitions of renormalized solutions
were  given  in \cite{11DMOP}. Here we use the following one:

  %%%%%%%%DEFINITION%%%%%%%%%%%%%%%%%%%%%%%%%%%%%%%%%%%%%%%%%%%%%%%%%%%%%%%%%%%%%%%%%%%%%%%%%%%%%%%%%%%%%%%%%%%%%%%%%%%%%%%%%%%%%%%%%%%%%%%%%%%%%%%%%%%%%%%%%%%%%%%%%%%%%%%%%%%%%%%%%%%%%%%%%%%%%%
  
  \begin{definition} \label{derenormalized} Let $\mu=\mu_0+\mu_s\in\mathfrak{M}_b(\Omega)$, with $\mu_0\in \mathfrak{M}_0(\Omega)$ and $\mu_s\in \mathfrak{M}_s(\Omega)$. A measurable  function $u$ defined in $\Omega$ and finite a.e. is called a renormalized solution of \eqref{5hh070120148}
  	if $T_k(u)\in W^{1,p}_0(\Omega)$ for any $k>0$, $|{\nabla u}|^{p-1}\in L^r(\Omega)$ for any $0<r<\frac{n}{n-1}$, and $u$ has the following additional property. For any $k>0$ there exist  nonnegative Radon measures $\lambda_k^+, \lambda_k^- \in\mathfrak{M}_0(\Omega)$ concentrated on the sets $\{u=k\}$ and $\{u=-k\}$, respectively, such that 
  	$\mu_k^+\rightarrow\mu_s^+$, $\mu_k^-\rightarrow\mu_s^-$ in the narrow topology of measures and  that
  	\[
  	\int_{\{|u|<k\}}\langle A(x,\nabla u),\nabla \varphi\rangle
  	dx=\int_{\{|u|<k\}}{\varphi d}{\mu_{0}}+\int_{\Omega}\varphi d\lambda_{k}%
  	^{+}-\int_{\Omega}\varphi d\lambda_{k}^{-},
  	\]
  	for every $\varphi\in W^{1,p}_0(\Omega)\cap L^{\infty}(\Omega)$.
  	  \end{definition}

Here we recall that a sequence  $\{\mu_{k}\} \subset
\mathfrak{M}_{b}(\Omega)$ is said to converge in the narrow topology of measures to $\mu \in
\mathfrak{M}_{b}(\Omega)$ if
$\lim_{k\rightarrow\infty}\int_{\Omega}\varphi \, d\mu_{k}=\int_{\Omega}\varphi \,
d\mu,$
for every bounded and continuous function $\varphi$ on $\Omega$.

It is known that if $\mu\in \mathfrak{M}_0(\Omega)$ then there is one and only one  renormalized solution of 
\eqref{5hh070120148} (see \cite{BGO, 11DMOP}). However, to the best of our knowledge, for a general $\mu\in \mathfrak{M}_b(\Omega)$ the uniqueness of renormalized solutions of \eqref{5hh070120148} is still an open problem.

Recall that the  Hardy-Littlewood maximal function ${\bf M}$ is defined for each locally integrable function  $f$ in $\mathbb{R}^{n}$ by
\begin{equation*}
{\bf M}(f)(x)=\sup_{\rho>0}\fint_{B_\rho(x)}|f(y)|dy \qquad \qquad \forall x\in\mathbb{R}^{n}.
\end{equation*}

For a signed measure $\mu$ in $\RR^n$,   the first order fractional maximal function of $\mu$,  $\mathbf{M}_1(\mu)$, is defined by
\begin{align*}
\mathbf{M}_1(\mu)(x):=\sup_{\rho>0}\frac{|\mu|(B_\rho(x))}{\rho^{n-1}} \qquad \qquad \forall  x\in \mathbb{R}^n.
\end{align*}

A  nonnegative  function $w\in L^1_{loc}(\mathbb{R}^{n})$ is said to be an $\mathbf{A}_{\infty}$ weight if there are two positive constants $C$ and $\nu$ such that
$$w(E)\le C \left(\frac{|E|}{|B|}\right)^\nu w(B),
$$
for all balls $B\subset\RR^n$ and all measurable subsets $E\subset B$. The pair $(C,\nu) $ is called the $\mathbf{A}_\infty$ constants of $w$ and is denoted by $[w]_{\mathbf{A}_\infty}$.  It is well-known that 
$$\mathbf{A}_\infty=\bigcup_{q>1} \mathbf{A}_q,$$
where we say that a nonnegative  function $w\in L^1_{loc}(\mathbb{R}^{n})$ belongs to the Muckenhoupt $\mathbf{A}_q$ class, $q>1$, if 
$$[w]_{\mathbf{A}_q}:= \sup_{{\rm balls~} B\subset\RR^n} \left(\fint_{B} wdx\right) \left(\fint_{B} w^{\frac{1}{1-q}} dx \right)^{q-1} <+\infty.$$

Our first result concerns with a weighted `good-$\lambda$' type inequality  for renormalized solutions of 			\eqref{5hh070120148}.
																				
\begin{theorem}\label{5hh23101312}   Let $w\in {\bf A}_\infty$, $\mu\in\mathfrak{M}_b(\Omega)$, $1<p\leq \frac{3n-2}{2n-1}$, and $\gamma_1\in \left(0, \frac{(p-1)n}{n-1}\right)$.   For any $\varepsilon>0,R_0>0$, one can find  constants $\delta_1=\delta_1(n,p,\Lambda,\gamma_1,\varepsilon,[w]_{{\bf A}_\infty})\in (0,1)$, $\delta_2=\delta_2(n,p,\Lambda, \gamma_1,\varepsilon,[w]_{{\bf A}_\infty},diam(\Omega)/R_0)\in (0,1)$, and $\Lambda_0=\Lambda_0(n,p,\Lambda, \gamma_1)>1$ such that if $\Omega$ is $(\delta_1,R_0)$-Reifenberg flat  and $[A]_{R_0}\le \delta_1$ then  for any renormalized solution $u$ to \eqref{5hh070120148} with $|\nabla u|\in L^{2-p}(\Om)$, we have 
\begin{align}\label{101120144}
&w(\{( {\bf M}(|\nabla u|^{\gamma_1}))^{1/\gamma_1}>\Lambda_0\lambda, (\mathbf{M}_1(\mu))^{\frac{1}{p-1}}\le \delta_2\lambda \}\cap U_{\epsilon,\lambda}\cap \Omega) \nonumber\\
&\qquad\leq C\varepsilon w(\{ ({\bf M}(|\nabla u|^{\gamma_1}))^{1/\gamma_1}> \lambda\}\cap \Omega), 
\end{align}
for any $\lambda>0$. Here  $U_{\epsilon,\lambda}=\{{\bf M}(|\nabla u|^{2-p}))^{\frac{1}{2-p}}\leq \varepsilon^{-1}\lambda\}$ and the constant $C$  depends only on $n,p,\Lambda,diam(\Omega)/R_0$, and $[w]_{{\bf A}_\infty}$. 
\end{theorem}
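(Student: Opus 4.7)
The plan is to extend the weighted good-$\lambda$ framework of \cite{QH4, 55Ph2} to the strongly singular range $1<p\leq \tfrac{3n-2}{2n-1}$, with the new ingredient being a comparison estimate whose error is controlled by $\mathbf{M}(|\nabla u|^{2-p})$; it is precisely this that forces the cut-off set $U_{\epsilon,\lambda}$ to appear in \eqref{101120144}. First I would invoke a Vitali/Calder\'on--Zygmund covering lemma for $\mathbf{A}_\infty$ weights (as in \cite{55Ph2}) to reduce \eqref{101120144} to a local Lebesgue density estimate: for every ball $B=B_r(x_0)$ of radius $r\le R_0/K$ meeting the left-hand set of \eqref{101120144} but whose dilate is not contained in $\{(\mathbf{M}(|\nabla u|^{\gamma_1}))^{1/\gamma_1}>\lambda\}$, one must prove
\begin{equation*}
|B\cap\{(\mathbf{M}(|\nabla u|^{\gamma_1}))^{1/\gamma_1}>\Lambda_0\lambda\}|\le C\varepsilon|B|.
\end{equation*}

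On such a ball I would perform the usual three-step comparison on $B_{2r}\cap\Omega$, splitting into interior balls ($B_{4r}\subset\Omega$) and boundary balls, the latter handled by straightening the Reifenberg hyperplane. Replace $u$ by the $A$-harmonic function $v$ with the same boundary data, replace $v$ by the $\overline{A}_{B_{2r}}$-harmonic function $w$ using $[A]_{R_0}\le\delta_1$, and in the boundary case finally replace $w$ by a solution $h$ on a half-space cylinder. The key regularity input is the Lipschitz-type bound
\begin{equation*}
\|\nabla h\|_{L^\infty(B_r)}\le C\Bigl(\fint_{B_{2r}}|\nabla h|^{\gamma_1}\,dx\Bigr)^{1/\gamma_1},
\end{equation*}
valid throughout $0<\gamma_1<(p-1)n/(n-1)$ by DiBenedetto--Manfredi--Lieberman type estimates for the model equation. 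The stopping hypothesis gives $(\fint_{B_{2r}}|\nabla u|^{\gamma_1})^{1/\gamma_1}\lesssim\lambda$, which transfers to $|\nabla h|$ up to the comparison errors.

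The hard part is the comparison estimate in the singular regime. When $p\le\tfrac{3n-2}{2n-1}$, testing the equation for $u-v$ against truncations $T_k(u-v)$ in the spirit of Boccardo--Gallou\"et only yields an $L^{p-1}$-type control of $\nabla(u-v)$, which by itself is too weak to recover $L^{\gamma_1}$ for the desired range of $\gamma_1$. The remedy is to combine this energy bound with the $L^{2-p}$ integrability of $\nabla u$ afforded by $U_{\epsilon,\lambda}$, namely $\fint_{B_{2r}}|\nabla u|^{2-p}\,dx\le(\varepsilon^{-1}\lambda)^{2-p}$, and then interpolate via a H\"older splitting between the two moments. The outcome should be an estimate of the form
\begin{equation*}
\Bigl(\fint_{B_r}|\nabla u-\nabla v|^{\gamma_1}\,dx\Bigr)^{1/\gamma_1}\le C\,\psi(\delta_2,\varepsilon)\,\lambda+C\Bigl[\tfrac{|\mu|(B_{2r})}{r^{n-1}}\Bigr]^{1/(p-1)},
\end{equation*}
with $\psi\to 0$ as $\delta_2\to 0$ for each fixed $\varepsilon$; the analogous bounds for $\nabla(v-w)$ and $\nabla(w-h)$ are standard perturbation results contributing a factor $\delta_1^{\alpha}\lambda$.

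Finally I would choose $\Lambda_0$ depending only on $n,p,\Lambda,\gamma_1$ so that $\|\nabla h\|_{L^\infty(B_r)}\le \tfrac12\Lambda_0\lambda$, and then pick $\delta_1,\delta_2$ small enough that the total comparison error is below $\tfrac12\Lambda_0\lambda$. A weak-type $(1,1)$ inequality for $\mathbf{M}$ applied to $|\nabla u-\nabla h|^{\gamma_1}\chi_{B_r}$ yields the required density estimate, which fed back into the $\mathbf{A}_\infty$ covering machinery gives \eqref{101120144}. The dependence of $\delta_2$ on $\mathrm{diam}(\Omega)/R_0$ enters only through the initial rescaling that makes the Calder\'on--Zygmund radii compatible with $R_0$.
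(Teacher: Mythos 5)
Your proposal follows essentially the same route as the paper: an $\mathbf{A}_\infty$ covering lemma reducing \eqref{101120144} to a local density estimate, a comparison with an $A$-harmonic (and, at the boundary, frozen-coefficient/half-space) reference whose gradient is bounded in $L^\infty$ by its $L^{\gamma_1}$ average, and — the key new point for $p\le\frac{3n-2}{2n-1}$ — a comparison error of the form $\frac{|\mu|(B)}{r^{n-1}}\fint_B|\nabla u|^{2-p}$ obtained by interpolating the truncation energy bound against the $L^{2-p}$ moment, which is exactly what the set $U_{\epsilon,\lambda}$ is designed to absorb. The only step you gloss over is that, since a renormalized solution need not lie in $W^{1,p}_{loc}$ in this range, the comparison must be run on the approximations $u_k=T_k(u)$ (solving with data $\mu_k$) and then passed to the limit; this is standard and does not affect the argument.
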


The presence of the set 	$U_{\epsilon,\lambda}$ in \eqref{101120144} makes Theorem 	\ref{5hh23101312} different from \cite[Theorem 1.5]{QH4} in which the case $\frac{3n-2}{2n-1}<p\leq 2- \frac{1}{n}$ was treated.  However, Theorem 	\ref{5hh23101312} can be used to obtain the following existence and regularity of solutions to
\eqref{5hh070120148}, which extends the results of \cite{QH4, 55Ph2} to the case $1<p\leq \frac{3n-2}{2n-1}$.

 \begin{theorem} \label{101120143} Let $\mu\in \mathfrak{M}_b(\Omega)$ and $1<p\leq \frac{3n-2}{2n-1}$.   For any $2-p< q<\infty$ and $w\in \mathbf{A}_{\frac{q}{2-p}}$, we can find  $\delta=\delta(n,p,\Lambda, q, [w]_{\mathbf{A}_{\frac{q}{2-p}}})\in (0,1)$ such that if $\Omega$ is  $(\delta,R_0)$-Reifenberg flat   and $[A]_{R_0}\le \delta$ for some $R_0>0$, then  there exists a renormalized solution $u$ to \eqref{5hh070120148} such that                           
                  \begin{equation}\label{mainbound4}
                              \|\nabla u\|_{L^{q}_w(\Omega)}\leq C \|[\mathbf{M}_1(\mu)]^{\frac{1}{p-1}}\|_{L^{q}_w(\Omega)}.
                                       \end{equation} 
                                        Here the constant $C$ depends only  on $n,p,\Lambda,q, [w]_{ \mathbf{A}_{\frac{q}{2-p}}}$, and $diam(\Omega)/R_0$.               
\end{theorem}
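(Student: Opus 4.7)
The plan is to upgrade the weighted good-$\lambda$ inequality of Theorem~\ref{5hh23101312} to the weighted $L^q$ bound \eqref{mainbound4} via the layer-cake formula, and then construct the renormalized solution by an approximation-stability argument. First I would regularize $\mu$ by smooth, compactly supported data $\mu_k$ converging narrowly to $\mu$ with $\mathbf{M}_1(\mu_k)\le C\,\mathbf{M}(\mathbf{M}_1(\mu))$ pointwise (using the decomposition $\mu=\mu_0+\mu_s$ as in \cite{11DMOP, QH4} followed by standard mollification), and let $u_k$ be the associated weak solution, for which $|\nabla u_k|\in L^{2-p}(\Omega)$ is automatic from the smoothness of $\mu_k$, so that Theorem~\ref{5hh23101312} applies with $\delta_1,\delta_2$ provided by that theorem.

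Fix $\gamma_1\in(0,(p-1)n/(n-1))$ and $\epsilon>0$ to be chosen small later. Multiplying \eqref{101120144} by $q\lambda^{q-1}$ and integrating in $\lambda\in(0,\infty)$ after the change of variable $\lambda\mapsto \Lambda_0\lambda$ yields, by the layer-cake formula,
\begin{align*}
\|(\mathbf{M}(|\nabla u_k|^{\gamma_1}))^{1/\gamma_1}\|_{L^q_w(\Omega)}^q
&\le C\epsilon\,\|(\mathbf{M}(|\nabla u_k|^{\gamma_1}))^{1/\gamma_1}\|_{L^q_w(\Omega)}^q \\
&\quad + C\,\|(\mathbf{M}_1(\mu_k))^{1/(p-1)}\|_{L^q_w(\Omega)}^q \\
&\quad + C\epsilon^{q}\,\|\mathbf{M}(|\nabla u_k|^{2-p})\|_{L^{q/(2-p)}_w(\Omega)}^{q/(2-p)}.
\end{align*}
The last term is the crucial new contribution coming from the exceptional set $U^c_{\epsilon,\lambda}$ that is absent in the regime $p>\frac{3n-2}{2n-1}$ treated in \cite{QH4, 55Ph2}. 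This is exactly where the hypothesis $w\in\mathbf{A}_{q/(2-p)}$ is used: since $q/(2-p)>1$, Muckenhoupt's theorem gives the boundedness of $\mathbf{M}$ on $L^{q/(2-p)}_w$, so the third term is dominated by $C\epsilon^q\|\nabla u_k\|_{L^q_w(\Omega)}^q$ and hence, by Lebesgue differentiation, by $C\epsilon^q\|(\mathbf{M}(|\nabla u_k|^{\gamma_1}))^{1/\gamma_1}\|_{L^q_w(\Omega)}^q$. Choosing $\epsilon$ small (depending only on $n,p,q,[w]_{\mathbf{A}_{q/(2-p)}}$) absorbs this piece together with the good-$\lambda$ piece into the left-hand side and produces the uniform bound
\[
\|\nabla u_k\|_{L^q_w(\Omega)}\le C\,\|(\mathbf{M}_1(\mu_k))^{1/(p-1)}\|_{L^q_w(\Omega)}\le C\,\|(\mathbf{M}_1(\mu))^{1/(p-1)}\|_{L^q_w(\Omega)}.
\]

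The main obstacle is the a priori finiteness required for the absorption step: the quantity $\|(\mathbf{M}(|\nabla u_k|^{\gamma_1}))^{1/\gamma_1}\|_{L^q_w(\Omega)}$ is not known to be finite without extra work, so one must insert an auxiliary truncation, say replace $|\nabla u_k|$ by $\min\{|\nabla u_k|,N\}$ in the preceding display, derive the bound with a constant independent of $N$, and only then let $N\to\infty$; similar care is needed when $q<1$ because $L^q_w$ is only a quasinorm, as handled in \cite{55Ph2, QH4}. Once the uniform estimate is established, the stability theory for renormalized solutions in \cite{11DMOP} yields a subsequence $u_k\to u$ with a.e.\ convergence of gradients, $u$ is a renormalized solution of \eqref{5hh070120148}, and Fatou's lemma transfers \eqref{mainbound4} from the $u_k$'s to $u$.
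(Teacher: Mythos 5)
Your strategy is the same as the paper's: run the good-$\lambda$ inequality of Theorem \ref{5hh23101312} through the layer-cake formula, use the hypothesis $w\in\mathbf{A}_{\frac{q}{2-p}}$ precisely to control the extra term coming from the set $U_{\epsilon,\lambda}$ via the Muckenhoupt maximal theorem on $L^{q/(2-p)}_w$, absorb, and then pass from mollified data to general $\mu$ by the stability theory of \cite{11DMOP}. All of that matches the paper's proof, including the observation that the $U_{\epsilon,\lambda}$ term is the new feature of the range $1<p\leq\frac{3n-2}{2n-1}$.

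The one genuine gap is the a priori finiteness step, which you correctly identify as the main obstacle but then resolve with a device that does not work. Truncating $|\nabla u_k|$ to $\min\{|\nabla u_k|,N\}$ is not legitimate, since Theorem \ref{5hh23101312} is not available for the truncated function; and the standard alternative of truncating the $\lambda$-integration at level $N$ also fails here, because the term
$\epsilon^{q}\|\mathbf{M}(|\nabla u_k|^{2-p})\|_{L^{q/(2-p)}_w}^{q/(2-p)}$
can only be dominated by $\epsilon^{q}\|\nabla u_k\|_{L^q_w}^{q}$ through the \emph{global} weighted maximal inequality; there is no truncated version of that inequality that would let you absorb this contribution into a truncated left-hand side. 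So you genuinely need to know beforehand that $|\nabla u_k|\in L^q_w(\Omega)$ for the approximating solutions, and this is not ``automatic from the smoothness of $\mu_k$'': for a general $\mathbf{A}_{\frac{q}{2-p}}$ weight and large $q$ it is itself a nontrivial regularity statement requiring the Reifenberg flatness of $\Omega$ and the smallness of $[A]_{R_0}$. The paper supplies it by quoting the weighted/Morrey estimate of \cite[Theorem 1.10]{55MePh3} for the solutions with mollified data, after which the absorption argument closes exactly as you describe. With that external input substituted for your truncation, the rest of your argument (the pointwise bound $\mathbf{M}_1(\rho_\epsilon*\mu)\leq C\,\mathbf{M}[\mathbf{M}_1(\mu)]$, the uniform estimate, stability, and Fatou) is the paper's proof.
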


\begin{remark}\label{uniq} By uniqueness of renormalized solutions with data in $\mathfrak{M}_{0}(\Om)$, we see that  \eqref{mainbound4} indeed holds for any renormalized solution $u$ to 
\eqref{5hh070120148} with datum $\mu\in\mathfrak{M}_{0}(\Om)$.	
\end{remark}

\begin{remark} Theorem  \ref{101120143}  also holds if we replace the weighted Lebesgue space  $L^{q}_w(\Omega)$ with the more general weighted Lorentz space 
	$L^{q,s}_w(\Omega)$, $0<s\leq \infty$, (see, e.g., \cite{QH4, 55Ph2}).
\end{remark}

We now describe our results in regard to the Riccati type equation  \eqref{Riccati}.  For this, we shall need the notion of capacity
associated to the Sobolev space $W^{1, s}(\RR^n)$,  $1<s<+\infty$.
For a compact set $K\subset\RR^n$,  we  define
\begin{equation*}
{\rm Cap}_{1,  s}(K)=\inf\Big\{\int_{\RR^n}(|\nabla \varphi|^s +\varphi^s) dx: \varphi\in C^\infty_0(\RR^n),
\varphi\geq \chi_K \Big\}.
\end{equation*}
%
%Note that ${\rm Cap}_{1,  s}$ can be extended to all sets $E\subset\RR^n$ by letting 
%\begin{equation*}
%{\rm Cap}_{1,  s}(E)=\inf_{\substack{O\supset E\\ O\, open}} \, \Big\{  \sup_{\substack{K\subset O\\ K \, compact}} {\rm Cap}_{1,  s}(K)\Big\}.
%\end{equation*}
%Moreover, by the capacitability of Borel sets (see, e.g., \cite[Theorem 2.3.11]{AH}) we have
%$${\rm Cap}_{1,  s}(E)=\sup_{\substack{K\subset E\\ K \, compact}} {\rm Cap}_{1,  s}(K)$$
%for any Borel set $E\subset\RR^n$.

As \cite[Theorem 1.9]{QH4} and \cite[Theorem 1.6]{55Ph2}, we  obtain the following sharp existence result but now for the case  $1<p\leq \frac{3n-2}{2n-1}$.
\begin{theorem}\label{main-Ric}
Let $1<p\leq \frac{3n-2}{2n-1}$ and  $q\geq 1$. 
There exists a constant  $\delta=\delta(n, p, \Lambda, q)\in (0, 1)$ such that the following holds. 
Suppose that  $[A]_{R_0}\leq \delta$ and $\Om$ is $(\delta, R_0)$-Reifenberg flat for some $R_0>0$. Then there exists 
$c_0=c_0(n, p, \Lambda, q, diam(\Om), R_0)>0$ such that if $\mu$ is a finite signed measure in $\Om$ with 
\begin{equation}\label{capcondi} 
|\mu|(K) \leq c_0\, {\rm Cap}_{1,\, \frac{q}{q-p+1}}(K)
\end{equation}
for all compact sets $K\subset\Om$, then there exists a renormalized solution $u\in W_0^{1, q}(\Om)$ to the Riccati type equation \eqref{Riccati} such that 
$$
\int_{K} |\nabla u|^q  \leq C\, {\rm Cap}_{1,\, \frac{q}{q-p+1}}(K)
$$
for all compact sets $K\subset\Om$. Here the constant $C$ depends only on $n, p, \Lambda, q,  diam(\Om)$, and $R_0$.
\end{theorem}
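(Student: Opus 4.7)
The plan is a Schauder fixed point argument in a convex subset of $W_0^{1,q}(\Om)$ carved out by a uniform capacity bound. Writing $s := q/(q-p+1)$ and fixing $M > 0$ to be chosen, set
$$
\mathcal{F} := \left\{v \in W_0^{1,q}(\Om) : \int_K |\nabla v|^q \, dx \leq M \, \mathrm{Cap}_{1, s}(K) \text{ for every compact } K \subset \Om\right\}.
$$
Define $T\colon \mathcal{F} \to W_0^{1, q}(\Om)$ by $Tv = u$, where $u$ is the renormalized solution, supplied by Theorem \ref{101120143} with datum $\nu_v := |\nabla v|^q \, dx + \mu \in \mathfrak{M}_b(\Om)$, of $-\mathrm{div}(A(x, \nabla u)) = \nu_v$ in $\Om$ with zero boundary values. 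A fixed point of $T$ will be the required solution.

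The main step is the self-mapping property $T(\mathcal{F}) \subseteq \mathcal{F}$. For $v \in \mathcal{F}$, the hypothesis \eqref{capcondi} gives $|\nu_v|(K) \leq (M + c_0)\,\mathrm{Cap}_{1, s}(K)$ on every compact $K \subset \Om$. By the Maz'ya--Verbitsky / Kerman--Sawyer characterization, such a testing condition is equivalent to the companion maximal-function bound
$$
\int_K [\mathbf{M}_1(|\nu_v|)]^{q/(p-1)}\, dx \leq C_1 (M+c_0)\, \mathrm{Cap}_{1, s}(K).
$$
Applying Theorem \ref{101120143} with test weight $w_K := (\mathbf{M}\chi_K)^{\beta}$, $\beta \in (0,1)$, so small that $w_K$ lies uniformly in $\mathbf{A}_{q/(2-p)}$ (possible since $q \geq 1 > 2-p$ throughout our range), and combining $w_K \geq 1$ on $K$ with the preceding capacity estimate for $[\mathbf{M}_1(|\nu_v|)]^{q/(p-1)}$, one obtains
$$
\int_K |\nabla u|^q \, dx \leq C_2 (M + c_0)\, \mathrm{Cap}_{1, s}(K).
$$
Setting $M := 2C_2 c_0$ and then choosing $c_0 = c_0(n, p, \Lambda, q, \mathrm{diam}(\Om), R_0)$ small enough that $C_2(M + c_0) \leq M$ closes the self-map property.

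Compactness and continuity of $T$ follow from the stability theory for renormalized solutions of \cite{11DMOP}: a weakly convergent sequence $v_n \to v$ in $W_0^{1,q}(\Om)$ yields, along a subsequence, $|\nabla v_n|^q \to |\nabla v|^q$ strongly in $L^1(\Om)$; stability gives $\nabla Tv_n \to \nabla Tv$ almost everywhere, and the uniform capacity bound on $\mathcal{F}$ provides equi-integrability in $L^q$, upgrading to strong $L^q$ convergence. Schauder's theorem then produces a fixed point $u \in \mathcal{F} \subset W_0^{1,q}(\Om)$, which solves \eqref{Riccati} in the renormalized sense and satisfies the claimed capacity estimate by definition of $\mathcal{F}$.

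The principal obstacle is the strongly singular range $1 < p \leq (3n-2)/(2n-1)$, where the pointwise Wolff-potential estimates used in \cite{QH4, 55Ph2} for analogous arguments are unavailable. Every capacity estimate must be routed through the weighted good-$\lambda$ inequality of Theorem \ref{5hh23101312}, whose extra truncation set $U_{\epsilon,\lambda}$ forces the capacity-identification step above to be executed only after verifying that $\mathbf{M}(|\nabla u|^{2-p})$ is suitably controlled; this constraint must be propagated through the fixed-point iteration and tracked simultaneously with the diffuse $|\nabla v|^q \, dx$ component of the data so that the stability theory for renormalized solutions remains applicable.
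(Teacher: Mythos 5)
Your overall route is the same as the paper's: a Schauder fixed point argument in which the self-map property comes from Theorem \ref{101120143} applied with the $\mathbf{A}_{q/(2-p)}$ weight $(\mathbf{M}\chi_K)^{\beta}$ together with the Maz'ya--Verbitsky/Kerman--Sawyer testing equivalence, and the compactness comes from a stability/equi-integrability argument --- this is precisely how the paper proceeds (it invokes its Lemma \ref{compactness} and Theorem \ref{101120143} and refers to \cite[Theorem 1.9]{QH4} for the details). However, two steps are wrong as written. First, the testing equivalence is homogeneous of degree $q/(p-1)$, not $1$: from $|\nu_v|(K)\le (M+c_0)\,{\rm Cap}_{1,s}(K)$ one gets $\int_K[\mathbf{M}_1(|\nu_v|)]^{q/(p-1)}dx\le C_1(M+c_0)^{q/(p-1)}{\rm Cap}_{1,s}(K)$, and consequently $\int_K|\nabla u|^q\,dx\le C_2(M+c_0)^{q/(p-1)}{\rm Cap}_{1,s}(K)$. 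Your closing choice $M:=2C_2c_0$ with the requirement $C_2(M+c_0)\le M$ does not close, since that inequality reduces to $C_2(2C_2+1)\le 2C_2$, which is independent of $c_0$ and false for $C_2>1/2$. The argument is rescued exactly by the correct superlinear exponent $q/(p-1)>1$: with $M+c_0=t$ one has $C_2t^{q/(p-1)}\le t/2$ for $t$ small, so taking $M=c_0=t/2$ with $t=t(n,p,\Lambda,q,diam(\Om),R_0)$ small enough works. You should fix this, as the smallness of $c_0$ is the heart of the theorem.

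Second, your compactness step begins with the false assertion that a weakly convergent sequence $v_n\rightharpoonup v$ in $W_0^{1,q}(\Om)$ has (a subsequence with) $|\nabla v_n|^q\to|\nabla v|^q$ strongly in $L^1(\Om)$; weak convergence gives no strong convergence of gradients. The correct organization, which is what the paper's Lemma \ref{compactness} supplies, is to show that $T$ is continuous for the norm topology of $W_0^{1,q}(\Om)$ (where $\nabla v_n\to\nabla v$ in $L^q$ does give $|\nabla v_n|^q\to|\nabla v|^q$ in $L^1$ along a subsequence, so the data converge and the stability theory of \cite{11DMOP} applies) and that $T(\mathcal{F})$ is precompact: the uniform capacity bound on $\nu_{v_n}$ yields boundedness and equi-integrability of $[\mathbf{M}_1(\nu_{v_n})]^{q/(p-1)}$ in $L^1(\Om)$ (using that ${\rm Cap}_{1,s}(E)\to0$ as $|E|\to0$, valid here since $s=q/(q-p+1)<n$ in the range considered), whence Lemma \ref{compactness} gives a subsequence with $\nabla Tv_{n'}\to\nabla Tv$ strongly in $L^q$. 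Note also that obtaining equi-integrability of $|\nabla u_j|^q$ in this singular range is itself nontrivial --- it is where the good-$\lambda$ inequality with the extra set $U_{\epsilon,\lambda}$ and the Orlicz-space absorption argument of Lemma \ref{compactness} enter --- so this cannot simply be read off from membership of $Tv_n$ in $\mathcal{F}$ before that membership has been established with uniform constants.
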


There is a vast literature on equations with a power growth in the gradient of the form \eqref{Riccati}. We only mention here the pioneering work \cite{HMV} which originally used 
capacity to treat   \eqref{Riccati} in the `linear' case $p=2$. For other contributions, see, e.g.,  the references in \cite{QH4}.

It is known  that  condition \eqref{capcondi} is sharp in the sense that   if \eqref{Riccati} has a solution with $\mu$ being  nonnegative and compactly supported in $\Om$ then  \eqref{capcondi} holds with  a different constant $c_0$ (see \cite{HMV, Ph1}). It is more general than  
  the Marcinkiewicz space condition $\mu\in L^{\frac{n(q-p+1)}{q},  \infty}(\Om)$, $q>\frac{n(p-1)}{n-1}$, (with a small norm), or the Fefferman-Phong type condition involving Morrey spaces (see, e.g., \cite{Ph1}). Moreover, Theorem \ref{main-Ric} implies that any 
compact set $K\subset\Om$ that is removable for the equation $-{\rm div} (A(x, \nabla u)) =|\nabla u|^q$ must be small in the sense that ${\rm Cap}_{1,\frac{q}{q-p+1}}(K)=0$ (see \cite[Theorem 3.9]{Ph1}).

%Theorem \ref{main-Ric} extends  similar existence  results obtained earlier for $2-\frac{1}{n}<p\leq n$ in 
%\cite{55Ph2, 55Ph2-2}. See  also \cite{Ph1, Ph1-1} or \cite{AdP1, AdP2} where the case $q>p$ or $q=p$ is studied, respectively. 
%In particular, Theorem \ref{main-Ric} solves  an open problem in \cite[page 13]{VHV} at least for compactly supported measures and  for  $\frac{3n-2}{2n-1}<p\leq 2-\frac{1}{n}$, $q\geq 1$. It is natural to expect that Theorem \ref{main-Ric} should also hold  for $p-1<q<1$ but we are not able to prove it here due to the lack of convexity.
%It is also worth mentioning that the  `linear' case $p=2$ was first considered in the pioneering work \cite{HMV}. There is a vast literature on equations of the form \eqref{Riccati} (but mostly for $0<q\leq p$). We  refer to  \cite{ALT, BBM,   BMP1, BMP2, VHV, CC, De, FMe,  FPR,  Gre, GT,  Me, PS} and to
%\cite{BMMP, FMu, FM3,  JMV1, JMV2, FV,  GMP1, GMP2,  Ph3} for various contributions. 

The paper is organized as follows. In Section \ref{sec-2} we obtain some important comparison estimates that are needed for the proof of Theorem \ref{5hh23101312}.
The proofs  Theorems \ref{5hh23101312}, \ref{101120143}, and \ref{main-Ric}  are given in Sections  \ref{sec-3}, \ref{sec-4}, and \ref{sec-5}, respectively.  

    \section{Comparison estimates}\label{sec-2}
Let $u\in W_{loc}^{1,p}(\Omega)$ be a solution of \eqref{5hh070120148}. For each ball $B_{2R}=B_{2R}(x_0)\subset\subset\Omega$, we consider the unique solution $w\in W_{0}^{1,p}(B_{2R})+u$
    to the  equation 
    \begin{equation}
    \label{111120146}\left\{ \begin{array}{rcl}
    - \operatorname{div}\left( {A(x,\nabla w)} \right) &=& 0 \quad in \quad B_{2R}, \\ 
    w &=& u\quad \text{on} \quad \partial B_{2R}.  
    \end{array} \right.
    \end{equation}

   Then  we have the following estimate for the difference $\nabla u-\nabla w$  in terms of the total variation of $\mu$ in $B_{2R}$ and the norm of $\nabla u$ in $L^{2-p}(B_{2R})$.\
	This estimate holds true for all $1<p\leq \frac{3n-2}{2n-1}$. For earlier results of this type for $p>\frac{3n-2}{2n-1}$, we refer to \cite{Mi2,55DuzaMing,Duzamin2,QH4}. 
		
    \begin{lemma}\label{1111201410}Let $u$ and $w$ be as in \eqref{111120146}, and assume that   $1<p\leq \frac{3n-2}{2n-1}$. Then
    	\begin{align*}
    	\left(\fint_{B_{2R}}|\nabla (u-w)|^{\gamma_1}\right)^{\frac{1}{\gamma_1}}\leq C\left(\frac{|\mu|(B_{2R})}{R^{n-1}}\right)^{\frac{1}{p-1}}+\frac{|\mu|(B_{2R})}{R^{n-1}}\fint_{B_{2R}}|\nabla u|^{2-p},
    	\end{align*}
    	for any $0<\gamma_1<\frac{n(p-1)}{n-1}$.
    \end{lemma}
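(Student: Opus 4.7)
The plan is to consider the difference $v=u-w$. By subtracting the equations satisfied by $u$ and $w$, the function $v$ is a (renormalized) solution in $B_{2R}$ of
$$
-\operatorname{div}\bigl(A(x,\nabla u)-A(x,\nabla w)\bigr)=\mu,\qquad v=0 \text{ on }\partial B_{2R},
$$
and the first step is to test this equation against the truncation $T_k(v)$ for a parameter $k>0$ to be optimized. Admissibility of $T_k(v)$ in the strongly singular regime, where $u$ need not lie in $W^{1,1}_{loc}$, is ensured through the approximation procedure intrinsic to the renormalized formulation of \cite{11DMOP}. Using the monotonicity consequence of \eqref{condi2}, namely $\langle A(x,\xi_1)-A(x,\xi_2),\xi_1-\xi_2\rangle\geq c(|\xi_1|+|\xi_2|)^{p-2}|\xi_1-\xi_2|^2$, the test identity yields the weighted energy inequality
$$
\int_{\{|v|\leq k\}\cap B_{2R}}(|\nabla u|+|\nabla w|)^{p-2}|\nabla v|^2\,dx \;\leq\; C\,k\,|\mu|(B_{2R}).
$$

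Next I split $\int_{B_{2R}}|\nabla v|^{\gamma_1}$ according to $\{|v|\leq k\}$ and $\{|v|>k\}$. On the sublevel set I factor
$$
|\nabla v|^{\gamma_1} = \bigl[(|\nabla u|+|\nabla w|)^{p-2}|\nabla v|^2\bigr]^{\gamma_1/2}\,(|\nabla u|+|\nabla w|)^{(2-p)\gamma_1/2}
$$
and apply Hölder with exponents $2/\gamma_1$ and $2/(2-\gamma_1)$ (legitimate since $\gamma_1<2$). Because in our range $\gamma_1<n(p-1)/(n-1)\leq n/(2n-1)<1$, the auxiliary exponent $(2-p)\gamma_1/(2-\gamma_1)$ is at most $2-p$, so a further Hölder converts the resulting integral into a power of $|B_{2R}|\fint|\nabla u|^{2-p}$; here $|\nabla w|$ is absorbed into $|\nabla u|$ via the standard comparison bound $\fint_{B_{2R}}|\nabla w|^\gamma \leq C\fint_{B_{2R}}|\nabla u|^\gamma$ (valid for $\gamma<n(p-1)/(n-1)$) for the homogeneous problem \eqref{111120146}. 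On the complementary set I would invoke Boccardo–Gallouet–Bénilan type Marcinkiewicz estimates, in particular $\||\nabla u|\|_{M^{n(p-1)/(n-1)}(B_{2R})}^{p-1}\leq C|\mu|(B_{2R})$ together with $\|v\|_{M^{n(p-1)/(n-p)}(B_{2R})}^{p-1}\leq C|\mu|(B_{2R})$, combined with the pointwise bound $|\nabla v|\leq |\nabla u|+|\nabla w|$. Collecting the two pieces and choosing $k$ to balance the weighted contribution against the Marcinkiewicz tail yields the universal potential term $\bigl(|\mu|(B_{2R})/R^{n-1}\bigr)^{1/(p-1)}$, while the Hölder piece carries over the correction $\frac{|\mu|(B_{2R})}{R^{n-1}}\fint|\nabla u|^{2-p}$.

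The main obstacle is the exponent arithmetic around the critical value $p=(3n-2)/(2n-1)$, which is precisely the threshold at which $2-p$ equals $n(p-1)/(n-1)$. For $p>(3n-2)/(2n-1)$ one has $2-p<n(p-1)/(n-1)$, so the Marcinkiewicz bound on $|\nabla u|$ forces $\fint|\nabla u|^{2-p}\lesssim \bigl(|\mu|/R^{n-1}\bigr)^{(2-p)/(p-1)}$, which absorbs the correction into the universal term and recovers the clean estimate of \cite{QH4,55Ph2}. In the strongly singular range $1<p\leq (3n-2)/(2n-1)$ this inequality reverses, the self-improvement breaks down, and $\fint|\nabla u|^{2-p}$ simply cannot be removed from the right-hand side. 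The true technical challenge, then, is not in deriving an inequality but in arranging the Hölder and Young steps so that every uncontrollable residue collapses into the single additive term $\frac{|\mu|(B_{2R})}{R^{n-1}}\fint|\nabla u|^{2-p}$, clean enough to be exploited in the subsequent good-$\lambda$ analysis of Theorem~\ref{5hh23101312}.
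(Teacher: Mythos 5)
Your starting point (testing with a truncation of $v=u-w$ to get $\int_{\{|v|\le k\}}(|\nabla u|+|\nabla w|)^{p-2}|\nabla v|^2\,dx\le Ck\,|\mu|(B_{2R})$) and your diagnosis of why the correction term $\frac{|\mu|(B_{2R})}{R^{n-1}}\fint|\nabla u|^{2-p}$ cannot be removed are both the same as in the paper. The gap is in your treatment of the superlevel set $\{|v|>k\}$. You propose to invoke Bénilan--Boccardo--Gallouët type Marcinkiewicz bounds $\|\nabla u\|_{M^{n(p-1)/(n-1)}(B_{2R})}^{p-1}\lesssim|\mu|(B_{2R})$ and $\|v\|_{M^{n(p-1)/(n-p)}(B_{2R})}^{p-1}\lesssim|\mu|(B_{2R})$. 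The first is simply unavailable: $u$ is only a local solution in $\Omega\supset\supset B_{2R}$, so its gradient on $B_{2R}$ cannot be controlled by $|\mu|(B_{2R})$ alone (an $A$-harmonic function has $\mu=0$ but arbitrary gradient). More importantly, even for the difference $v$, which does vanish on $\partial B_{2R}$, neither the gradient nor the function itself admits a clean Marcinkiewicz bound in the range $1<p\le\frac{3n-2}{2n-1}$: since $p<2$, the truncated energy identity controls only the degenerate quantity $g(u,w)=(|\nabla u|+|\nabla w|)^{p-2}|\nabla v|^2$ and not $\int|\nabla T_kv|^p$, so the classical Boccardo--Gallouët iteration does not close. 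Establishing any weak-type control here is the entire content of the lemma, so invoking it is circular.

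What the paper actually does on this point, and what is missing from your plan, is the following. From the level-set recursion $k^{1/2}|F_{2k}|^{\frac{n-1}{n}+\epsilon}\le Ck^{-1/2+1/p}|F_k|^{\frac{p-1}{p}+\epsilon}+CQ_1^{\frac{2-p}{2}}$ (obtained from Sobolev on $T_{2k}(v)-T_k(v)$ and the pointwise inequality $|\nabla v|\lesssim g^{1/p}+g^{1/2}|\nabla u|^{(2-p)/2}$), one chooses the free parameter $\epsilon=\frac{3n-2-p(2n-1)}{2(p-1)n}$ so that the two weak-Lebesgue exponents on both sides coincide at $\frac{n(p-1)}{n-p}$; this yields the corrected bound $\|u-w\|_{L^{\frac{n(p-1)}{n-p},\infty}(B_2)}\le C+CQ_1^{2-p}$ with $Q_1=\|\nabla u\|_{L^{2-p}(B_2)}$. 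One then interpolates this against the truncated energy bound to get a weak-type estimate on $g(u,w)$ over the whole ball, and only at the end converts $g$ back into $|\nabla v|^{\gamma_1}$ via the same pointwise inequality and Hölder. Your proposal never produces the decay in $k$ of $|\{|v|>k\}|$ that the balancing step requires, and any attempt to repair it leads back to exactly this two-exponent matching argument. (A secondary, more minor issue: the comparison $\fint_{B_{2R}}|\nabla w|^\gamma\le C\fint_{B_{2R}}|\nabla u|^\gamma$ for $\gamma<p$ over the same ball, which you use to absorb $|\nabla w|$, is usually itself derived from the lemma being proved, so it should not be assumed.)
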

\begin{proof} By scaling invariance, we may assume that  $ |\mu|(B_{2R})=1$ and $B_{2R}=B_{2}$. For $k>0$, 
	using $\varphi= T_{2k}(u-w)$ as a test function, we have 
$$
	\int_{B_{2}\cap\{|u-w|<2k\}}g(u,w)dx\leq Ck , \text{~~with~~} g(u,w)=\frac{|\nabla (u-w)|^2}{(|\nabla w|+|\nabla u|)^{2-p}}.$$
	Set $E_k=B_{2}\cap\{k<|u-w|<2k\}$, and $F_k=B_{2}\cap\{|u-w|>k\}$. Using  H\"older's inequality yields 
	\begin{align*}
	&k\left|\left\{x:|u-w|>2k\right\}\cap B_{2}\right|^{\frac{n-1}{n}}\leq C \left(\int_{B_{ 2}}|T_{2k}(u-w)-T_{k}(u-w)|^{\frac{n}{n-1}}\right)^{\frac{n-1}{n}}\\&\leq  C \int_{E_k}|\nabla (u-w)|\\&\leq C  \int_{E_k} g(u,w)^{1/p}+g(u,w)^{1/2}|\nabla u|^{\frac{2-p}{2}}\\&\leq C|E_k|^{\frac{p-1}{p}} \left(\int_{E_k} g(u,w)\right)^{1/p}+\left(\int_{E_k} g(u,w)\right)^{1/2} \left(\int_{E_k} |\nabla u|^{2-p}\right)^{\frac{1}{2}}.
	\end{align*}
Here in the third inequality we  used that  
\begin{equation}
\label{z1}	|\nabla (u-w)|\leq C \left( g(u,w)^{1/p}+g(u,w)^{1/2}|\nabla u|^{\frac{2-p}{2}}\right),
\end{equation}
which holds provided $1<p<2$.
	As 
	$
	\int_{E_k} g(u,w)\leq Ck,$ we thus find
	\begin{align*}
	&k^{1/2}\left|F_{2k}\right|^{\frac{n-1}{n}}
	\leq Ck^{-1/2+1/p}|F_k|^{\frac{p-1}{p}}+CQ_1^{\frac{2-p}{2}},
	\end{align*}
	where we set  $Q_1=||\nabla u||_{L^{2-p}(B_{2})}$.

Note that we can write for $\epsilon\geq 0$,
\begin{align*}
&k^{1/2}\left|F_{2k}\right|^{\frac{n-1}{n}+\epsilon}
\leq Ck^{-1/2+1/p}|F_k|^{\frac{p-1}{p}+\epsilon}+CQ_1^{\frac{2-p}{2}},
\end{align*}
which implies 
\begin{equation*}
||u-w||_{L^{\frac{1}{2\left(\frac{n-1}{n}+\epsilon\right)},\infty}(B_2)}^{1/2}\leq C ||u-w||_{L^{\frac{1/p-1/2}{\frac{p-1}{p}+\epsilon},\infty}(B_2)}^{1/p-1/2}+CQ_1^{\frac{2-p}{2}}.
\end{equation*}

Choosing 
$$
\epsilon=\frac{3n-2-p(2n-1)}{2(p-1)n},
$$
we have 
$$
\frac{1}{2\left(\frac{n-1}{n}+\epsilon\right)}=\frac{1/p-1/2}{\frac{p-1}{p}+\epsilon}=\frac{n(p-1)}{n-p}.
$$

Thus, using Holder's inequality we obtain
\begin{equation}\label{z2}
||u-w||_{L^{\frac{n(p-1)}{n-p},\infty}(B_2)}\leq C+CQ_1^{2-p}.
\end{equation}

	For $k,\lambda\geq 0$, and $q=\frac{n(p-1)}{n-p}$,  we have 
	\begin{align*}
|\{x: g(u,w)>&\lambda\}\cap B_{2}|\leq|\{x:|u-w|>k\}\cap B_{2}|+
	\\&\quad  +\frac{1}{\lambda}\int_{0}^{\lambda}|\{x:|u-w|\leq k,g(u,w)>s\}\cap B_{2}| ds
	\\&\leq Ck^{-q}||u-w||_{L^{q,\infty}(B_2)}^q+\frac{1}{\lambda}\int_{B_{2}\cap\{x:|u-w|\leq k\}}g(u,w) dx
	\\&\leq Ck^{-q}||u-w||_{L^{q,\infty}(B_2)}^q+\frac{C k}{\lambda}.
	\end{align*}

	 Then choosing $$k=\left[\lambda ||u-w||_{L^{q,\infty}(B_2)}^q\right]^{\frac{1}{1+q}},$$
	we obtain
$$
	\lambda^{\frac{q}{1+q}}|\{x:g(u,w)>\lambda\}\cap B_{2}|\leq C ||u-w||_{L^{q,\infty}(B_2)}^{\frac{q}{q+1}},$$
	for all $\lambda>0$. This means 
$$
	||g(u,w)||_{L^{\frac{q}{q+1},\infty}}\leq  ||u-w||_{L^{q,\infty}(B_2)}.
$$
	
	Let $\gamma_1\in(0,\frac{n(p-1)}{n-1})$.
	By \eqref{z1} and Holder's inequality with exponents $\frac{2}{p}$ and $\frac{2}{2-p}$:
	\begin{align*}
	\int_{B_2}|\nabla (u-w)|^{\gamma_1}&\leq C\int_{B_2} \left[g(u,w)^{\gamma_1/p}+g(u,w)^{\gamma_1/2}|\nabla u|^{\frac{\gamma_1(2-p)}{2}}\right]
	\\ &\leq C||g(u,w)||_{L^{\frac{q}{q+1},\infty}}^{\gamma_1/p} +C\left(\int_{B_2} g(u,w)^{\gamma_1/p}\right)^{\frac{p}{2}}\left(\int_{B_2}|\nabla u|^{\gamma_1}\right)^{\frac{2-p}{2}}
	\\ &\leq C||g(u,w)||_{L^{\frac{q}{q+1},\infty}}^{\gamma_1/p} +C||g(u,w)||_{L^{\frac{q}{q+1},\infty}}^{\gamma_1/2}Q_1^{\frac{(2-p)\gamma_1}{2}}.
	\end{align*}
Here we  used the fact that 
	$$\frac{\gamma_1}{p}<\frac{q}{q+1}, \quad \gamma_1\leq 2-p.$$
	Combining this with \eqref{z2} yields
$$
	\int_{B_2}|\nabla (u-w)|^{\gamma_1}\leq C+CQ_1^{(2-p)\gamma_1}
$$
which implies the result. 
\end{proof}

\medskip
Just as \cite[Proposition 2.3]{QH4}, we also obtain from Lemma \ref{1111201410} the following result.

\begin{proposition} \label{inter} Let $0<\gamma_1<\frac{(p-1)n}{n-1}$. There exists $v\in W^{1,p}(B_R)\cap W^{1,\infty}(B_{R/2})$ such that for any $\varepsilon>0$, 
$$
	\|\nabla v\|_{L^\infty(B_{R/2})}\leq C_\varepsilon \left[\frac{|\mu|(B_{2R})}{R^{n-1}}\right]^{\frac{1}{p-1}}+C \left(\fint_{B_{2R}}|\nabla u|^{\gamma_1}\right)^{\frac{1}{\gamma_1}}+\varepsilon \left(\fint_{B_{2R}}|\nabla u|^{2-p}\right)^{\frac{1}{2-p}},
$$
	and
	\begin{align*}
	&\left(\fint_{B_{R}}|\nabla u-\nabla v|^{\gamma_1}dx\right)^{\frac{1}{\gamma_1}} \leq C_\varepsilon \left[\frac{|\mu|(B_{2R})}{R^{n-1}}\right]^{\frac{1}{p-1}}+\\
	&\qquad\qquad  +C(([A]_{R_0})^{\kappa} +\varepsilon)\left(\fint_{B_{2R}}|\nabla u|^{\gamma_1}\right)^{\frac{1}{\gamma_1}}+\varepsilon \left(\fint_{B_{2R}}|\nabla u|^{2-p}\right)^{\frac{1}{2-p}},
	\end{align*}
for some $C_\varepsilon=C(n,p,\Lambda,\varepsilon)>0$. Here $\kappa$ is a constant in $(0,1)$.
\end{proposition}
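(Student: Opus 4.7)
My plan is to construct $v$ by a two-step comparison: first pass from $u$ to the $A$-harmonic replacement $w$ on $B_{2R}$ (the object already studied in Lemma \ref{1111201410}), and then compare $w$ to the solution $v$ of the associated \emph{frozen-coefficient} (autonomous) problem with nonlinearity $\overline{A}_{B_R}$. Specifically, set $\overline{A}(\xi) := \fint_{B_R} A(x,\xi)\,dx$ and let $v \in w + W_0^{1,p}(B_R)$ be the unique solution of $-\operatorname{div}(\overline{A}(\nabla v)) = 0$ in $B_R$. Since $\overline{A}$ is autonomous and inherits \eqref{condi1}--\eqref{condi2}, the DiBenedetto--Tolksdorf--Uhlenbeck $C^{1,\alpha}$ regularity theory yields the interior Lipschitz bound
\[
\|\nabla v\|_{L^\infty(B_{R/2})} \le C \left(\fint_{B_R} |\nabla v|^{\gamma_1}\right)^{1/\gamma_1}.
\]

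The second ingredient is the standard BMO comparison estimate
\[
\left(\fint_{B_R} |\nabla(w-v)|^{\gamma_1}\right)^{1/\gamma_1} \le C\,\bigl([A]_{R_0}\bigr)^{\kappa} \left(\fint_{B_{2R}} |\nabla w|^{\gamma_1}\right)^{1/\gamma_1}
\]
for some $\kappa \in (0,1)$, obtained by testing $w-v$ against the difference of the two equations, exploiting monotonicity of $\overline{A}$, and combining with a Gehring-type reverse H\"older inequality to bring the natural $L^p$ bound down to the sub-$p$ exponent $\gamma_1$.

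The remaining step is purely algebraic. Lemma \ref{1111201410} produces a term of the form $X \cdot Y$ where $X = |\mu|(B_{2R})/R^{n-1}$ and $Y = \fint_{B_{2R}} |\nabla u|^{2-p}$. Writing $X \cdot Y = (X^{1/(p-1)})^{p-1}(Y^{1/(2-p)})^{2-p}$ and using $(p-1)+(2-p)=1$ in Young's inequality with a parameter $\delta$ chosen so that the coefficient of $Y^{1/(2-p)}$ equals $\varepsilon$, we obtain
\[
X \cdot Y \le C_\varepsilon\, X^{1/(p-1)} + \varepsilon\, Y^{1/(2-p)}.
\]
Substituting this into Lemma \ref{1111201410} converts the right-hand side into exactly the form required. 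Then a triangle inequality $\nabla u - \nabla v = \nabla(u-w) + \nabla(w-v)$, together with $\fint_{B_{2R}}|\nabla w|^{\gamma_1} \le C(\fint_{B_{2R}}|\nabla u|^{\gamma_1} + \fint_{B_{2R}}|\nabla(u-w)|^{\gamma_1})$ and the observation that $\fint_{B_R}|\nabla v|^{\gamma_1}$ is controlled via the same chain, yields both conclusions of the proposition.

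The main obstacle I expect is the BMO comparison at the low exponent $\gamma_1 < n(p-1)/(n-1) < p$: the testing argument naturally lives at the $L^p$ level, so passing to the sub-$p$ exponent requires invoking a self-improvement (Gehring/Iwaniec) for $\nabla w$, and careful bookkeeping is needed to keep the two small parameters $[A]_{R_0}^\kappa$ and $\varepsilon$ additive in the final estimate rather than multiplicatively entangled with the $L^{2-p}$ term.
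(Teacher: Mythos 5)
Your proposal is correct and follows essentially the same route as the paper, which obtains the proposition from Lemma \ref{1111201410} exactly "as in \cite[Proposition 2.3]{QH4}": freeze the coefficients on $B_R$, invoke interior Lipschitz regularity for the autonomous equation, use the $(\delta,R_0)$-BMO comparison to produce the $([A]_{R_0})^{\kappa}$ factor, and split the product term $\frac{|\mu|(B_{2R})}{R^{n-1}}\fint_{B_{2R}}|\nabla u|^{2-p}$ by Young's inequality with exponents $\frac{1}{p-1}$ and $\frac{1}{2-p}$. Your construction of $v$ is independent of $\varepsilon$, so the quantifier order in the statement is respected, and the bookkeeping you describe is exactly what is needed.
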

\medskip

      Lemmas  \ref{1111201410} and Proposition \ref{inter} can be extended  up to  boundary. We recall that $\Omega$ is $(\delta_0,R_0)$-Reifenberg flat  with $\delta_0<1/2$. Fix $x_0\in \partial\Omega$ and $0<R<R_0/10$. With $u\in W_0^{1,p}(\Omega)$ being a solution to \eqref{5hh070120148}, 
      we now consider the unique solution $w\in W_{0}^{1,p}(\Omega_{10R}(x_0))+u$
       to the following equation 
      \begin{equation}
      \label{111120146*}\left\{ \begin{array}{rcl}
      - \operatorname{div}\left( {A(x,\nabla w)} \right) &=& 0 \quad ~~~\text{in}\quad \Omega_{10R}(x_0), \\ 
      w &=& u\quad \quad \text{on} \quad \partial \Omega_{10R}(x_0),
      \end{array} \right.
      \end{equation}
      where we define   $\Omega_{10R}(x_0)=\Omega\cap B_{10R}(x_0)$. 
			
  Then we have the following analogue of Lemmas  \ref{1111201410}.
  \begin{lemma}\label{111120149"} Let $0<\gamma_1<\frac{(p-1)n}{n-1}$, $1<p\leq \frac{3n-2}{2n-1}$, and  let $u, w$ be as in \eqref{111120146*}. Then we  have
  	\begin{align*}\nonumber
  \left(	\fint_{B_{10R}(x_0)}|\nabla (u-w)|^{\gamma_1}dx\right)^{\frac{1}{\gamma_1}}&\leq C\left[\frac{|\mu|(B_{10R}(x_0))}{R^{n-1}}\right]^{\frac{1}{p-1}}+\\
&\qquad +	C \frac{|\mu|(B_{10R}(x_0))}{R^{n-1}}\fint_{B_{10R}(x_0)}|\nabla u|^{2-p}dx.
  	\end{align*}
  \end{lemma}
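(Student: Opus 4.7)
The plan is to mimic the proof of Lemma \ref{1111201410} with the adjustments required near the boundary. By scaling, we may assume $|\mu|(B_{10R}(x_0))=1$ and, after translating/dilating, work on the set $\Omega_{10}(x_0)=\Omega\cap B_{10}(x_0)$. The key observation is that, although $u$ and $w$ are not compactly supported, their difference $u-w$ lies in $W^{1,p}_0(\Omega_{10}(x_0))$, so it admits a zero extension across $\partial\Omega_{10}(x_0)$, and in particular $T_{2k}(u-w)\in W^{1,p}_0(\Omega_{10}(x_0))$ is an admissible test function in the weak formulations of the equations for $u$ and $w$. Subtracting these and using the standard monotonicity argument with $1<p<2$ gives
\begin{equation*}
\int_{\Omega_{10}(x_0)\cap\{|u-w|<2k\}} g(u,w)\,dx \leq C k,\qquad g(u,w):=\frac{|\nabla(u-w)|^2}{(|\nabla u|+|\nabla w|)^{2-p}},
\end{equation*}
exactly as in the interior case.

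Next, extend $T_{2k}(u-w)-T_k(u-w)$ by zero to all of $B_{10}(x_0)$ (or to $\RR^n$), and apply the Sobolev inequality on $B_{10}(x_0)$. Writing $E_k=\Omega_{10}(x_0)\cap\{k<|u-w|<2k\}$ and $F_k=\Omega_{10}(x_0)\cap\{|u-w|>k\}$, the pointwise decomposition \eqref{z1}, H\"older's inequality, and the energy bound above yield
\begin{equation*}
k^{1/2}|F_{2k}|^{\frac{n-1}{n}+\epsilon}\leq C k^{-1/2+1/p}|F_k|^{\frac{p-1}{p}+\epsilon}+C Q_1^{\frac{2-p}{2}},
\end{equation*}
with $Q_1=\||\nabla u|\|_{L^{2-p}(B_{10}(x_0))}$; here the Reifenberg flatness of $\Omega$ enters only to guarantee that $|B_{10}(x_0)|\simeq|\Omega_{10}(x_0)|$, so that averages over the intersection and over the full ball are comparable up to constants depending on the flatness parameter $\delta_0<1/2$. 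Choosing $\epsilon=\frac{3n-2-p(2n-1)}{2(p-1)n}\geq 0$ (which is precisely where the range $p\leq\frac{3n-2}{2n-1}$ is used) balances both exponents to $\frac{n(p-1)}{n-p}$, and gives the Marcinkiewicz estimate
\begin{equation*}
\|u-w\|_{L^{\frac{n(p-1)}{n-p},\infty}(B_{10}(x_0))}\leq C+C Q_1^{2-p}.
\end{equation*}

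From here, the interpolation argument in the proof of Lemma \ref{1111201410} applies verbatim: one bounds the distribution function of $g(u,w)$ by splitting according to the size of $|u-w|$, optimizing the cutoff $k$ to get $\|g(u,w)\|_{L^{q/(q+1),\infty}(\Omega_{10}(x_0))}\leq C\|u-w\|_{L^{q,\infty}(B_{10}(x_0))}$ with $q=\frac{n(p-1)}{n-p}$. Then \eqref{z1} together with H\"older with exponents $2/p$ and $2/(2-p)$, the constraints $\gamma_1/p<q/(q+1)$ and $\gamma_1\leq 2-p$ (both satisfied since $\gamma_1<\frac{n(p-1)}{n-1}$), and the Marcinkiewicz bound above yield
\begin{equation*}
\fint_{B_{10}(x_0)} |\nabla(u-w)|^{\gamma_1}\,dx \leq C + C Q_1^{(2-p)\gamma_1},
\end{equation*}
from which the claim follows after undoing the scaling. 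The only real obstacle beyond the interior case is verifying that the zero extension of $u-w$ makes Sobolev's inequality on the ball admissible and that the volume comparison $|\Omega_{10R}(x_0)|\simeq|B_{10R}(x_0)|$ is in force; both are handled by the Reifenberg flatness hypothesis with $\delta_0<1/2$.
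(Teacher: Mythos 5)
Your argument is correct and is essentially the intended one: the paper states Lemma \ref{111120149"} without proof as the boundary analogue of Lemma \ref{1111201410}, and your adaptation---testing with $T_{2k}(u-w)\in W^{1,p}_0(\Omega_{10R}(x_0))$, extending by zero across $\partial\Omega_{10R}(x_0)$ so that Sobolev's inequality applies on the full ball, and then rerunning the interior interpolation argument verbatim---is exactly that analogue. (A minor remark: since $u-w$ and $\nabla u$ are extended by zero and every quantity in the conclusion is an average over the full ball $B_{10R}(x_0)$, the volume comparison $|\Omega_{10R}(x_0)|\simeq|B_{10R}(x_0)|$ is not actually needed anywhere, so the Reifenberg hypothesis plays no role in this particular lemma.)
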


Using Lemma \ref{111120149"} and \cite[Corollary 2.13]{55Ph2} we can derive the following boundary version of Proposition \ref{inter}.
   \begin{proposition} \label{boundary} Let $0<\gamma_1<\frac{(p-1)n}{n-1}$. For any $\varepsilon>0$, there exists a constant  $\delta_0=\delta_0(n,p,\Lambda,\varepsilon)\in(0,1)$ such that the following holds. If $\Omega$ is $(\delta_0,R_0)$-Reifenberg flat  and $u\in W_{0}^{1,p}(\Omega)$, $x_0\in\partial \Omega$, and $0<R<R_0/10$, then there exists a function $V\in W^{1,\infty}(B_{R/10}(x_0))$ 
such that   	
\begin{align*}
   	\|\nabla V\|_{L^\infty(B_{R/10})}&\leq C_\varepsilon \left[\frac{|\mu|(B_{10R})}{R^{n-1}}\right]^{\frac{1}{p-1}}+C \left(\fint_{B_{10R}}|\nabla u|^{\gamma_1}\right)^{\frac{1}{\gamma_1}}\\
   	&\qquad \qquad  + \varepsilon\left(\fint_{B_{10R}}|\nabla u|^{2-p}\right)^{\frac{1}{2-p}},
   	\end{align*}
   	and
   	\begin{align*}
   &\left(\fint_{B_{R/10}}|\nabla (u-V)|^{\gamma_1}dx\right)^{\frac{1}{\gamma_1}}\leq C_\varepsilon \left[\frac{|\mu|(B_{10R})}{R^{n-1}}\right]^{\frac{1}{p-1}}\\&~~~~~+C(([A]_{R_0})^{\kappa} +\varepsilon)\left(\fint_{B_{10R}}|\nabla u|^{\gamma_1}\right)^{\frac{1}{\gamma_1}}+ \varepsilon\left(\fint_{B_{10R}}|\nabla u|^{2-p}\right)^{\frac{1}{2-p}},
   	\end{align*}
   	for some $C_\varepsilon=C(n,p,\Lambda,\varepsilon)>0$. Here $\kappa$ is a constant in $(0,1)$ and  the balls are centered at $x_0$.
   \end{proposition}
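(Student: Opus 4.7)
The strategy closely parallels the proof of Proposition~\ref{inter}: the idea is to combine the boundary comparison Lemma~\ref{111120149"} with the boundary Lipschitz-type approximation recorded in \cite[Corollary 2.13]{55Ph2}. First I would let $w\in W_0^{1,p}(\Omega_{10R}(x_0))+u$ be the unique solution of \eqref{111120146*}, so that Lemma~\ref{111120149"} yields
\begin{equation*}
\left(\fint_{B_{10R}}|\nabla(u-w)|^{\gamma_1}\right)^{1/\gamma_1}\leq C\left[\frac{|\mu|(B_{10R})}{R^{n-1}}\right]^{\frac{1}{p-1}} + C\,\frac{|\mu|(B_{10R})}{R^{n-1}}\fint_{B_{10R}}|\nabla u|^{2-p}.
\end{equation*}
To cast the last term into the form required by the statement, I would apply Young's inequality with the conjugate pair $r=1/(p-1)$, $s=1/(2-p)$ (note $(p-1)+(2-p)=1$) to the product
\[
X^{p-1}Y^{2-p},\qquad X:=\left[\tfrac{|\mu|(B_{10R})}{R^{n-1}}\right]^{1/(p-1)},\quad Y:=\left(\fint_{B_{10R}}|\nabla u|^{2-p}\right)^{1/(2-p)},
\]
after the scaling $X\mapsto \varepsilon^{-(2-p)/(p-1)}X$, $Y\mapsto\varepsilon Y$ (which leaves the product invariant). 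This produces a bound of the form $C_\varepsilon X+\varepsilon Y$, which is exactly the decomposition needed.

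Next, since $w$ is $A$-harmonic in the Reifenberg flat piece $\Omega_{10R}(x_0)$, I would invoke \cite[Corollary 2.13]{55Ph2}: for every $\varepsilon>0$ there exists $\delta_0=\delta_0(n,p,\Lambda,\varepsilon)\in(0,1)$ such that if $\Omega$ is $(\delta_0,R_0)$-Reifenberg flat, then one can construct $V\in W^{1,\infty}(B_{R/10}(x_0))$ with
\begin{align*}
\|\nabla V\|_{L^\infty(B_{R/10})} &\leq C\left(\fint_{B_{10R}}|\nabla w|^{\gamma_1}\right)^{1/\gamma_1},\\
\left(\fint_{B_{R/10}}|\nabla(w-V)|^{\gamma_1}\right)^{1/\gamma_1} &\leq C\bigl(([A]_{R_0})^\kappa+\varepsilon\bigr)\left(\fint_{B_{10R}}|\nabla w|^{\gamma_1}\right)^{1/\gamma_1}.
\end{align*}
Morally, the Reifenberg hypothesis allows one to replace $\Omega_{10R}$ by a half-ball (extending $w$ by zero across the flat boundary), while the BMO-smallness $[A]_{R_0}\leq\delta_0$ permits freezing the coefficients to their average; $V$ is then the gradient of a solution to the frozen, homogeneous equation on a half-ball, which enjoys the required Lipschitz regularity up to the flat boundary.

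Finally, I would use $(\fint_{B_{10R}}|\nabla w|^{\gamma_1})^{1/\gamma_1}\leq (\fint_{B_{10R}}|\nabla u|^{\gamma_1})^{1/\gamma_1}+(\fint_{B_{10R}}|\nabla(u-w)|^{\gamma_1})^{1/\gamma_1}$, plug in the bound from Step~1 (already reshaped by Young), and combine the two approximations via the triangle inequality $\nabla(u-V)=\nabla(u-w)+\nabla(w-V)$. This produces both inequalities claimed in the proposition after a harmless relabeling of $\varepsilon$ and the absorption of $p$-dependent constants. The main obstacle is purely one of bookkeeping: one must simultaneously tune the BMO-threshold $\delta_0$ inside the application of \cite[Corollary 2.13]{55Ph2} and the parameter $\varepsilon$ used in Young's inequality so that, after combining all three estimates, the coefficient in front of $(\fint_{B_{10R}}|\nabla u|^{2-p})^{1/(2-p)}$ collapses exactly to $\varepsilon$ and not to a constant multiple of it depending on $p$.
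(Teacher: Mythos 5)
Your plan is correct and coincides with the paper's own (very terse) argument, which simply combines Lemma \ref{111120149"} with \cite[Corollary 2.13]{55Ph2}; the Young-inequality splitting of the term $\frac{|\mu|(B_{10R})}{R^{n-1}}\fint_{B_{10R}}|\nabla u|^{2-p}$ into $C_\varepsilon\bigl[\frac{|\mu|(B_{10R})}{R^{n-1}}\bigr]^{\frac{1}{p-1}}+\varepsilon\bigl(\fint_{B_{10R}}|\nabla u|^{2-p}\bigr)^{\frac{1}{2-p}}$ is exactly the mechanism that produces the $\varepsilon$-terms, mirroring the passage from Lemma \ref{1111201410} to Proposition \ref{inter}. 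The only caveats are cosmetic: for $\gamma_1<1$ the triangle inequality for the $L^{\gamma_1}$ quasi-norm holds only up to a multiplicative constant, which is harmlessly absorbed into $C$.
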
 

\section{Proof of Theorem \ref{5hh23101312}}\label{sec-3}

The proof of  Theorem  \ref{5hh23101312} is based on Propositions \ref{inter} and \ref{boundary} and the following technical lemma  (see \cite{55MePh2}).

\begin{lemma}\label{5hhvitali2} Let $\Omega$ be a $(\delta,R_0)$-Reifenberg flat domain with $\delta<1/4$ and let $w$ be an $\mathbf{A}_\infty$ weight. Suppose that the sequence of balls $\{B_r(y_i)\}_{i=1}^L$ with centers $y_i\in\overline{\Omega}$ and  radius $r\leq R_0/4$ covers $\Omega$.  Let $E\subset F\subset \Omega$ be measurable sets for which 
 there exists $0<\varepsilon<1$ such that  
\begin{description}
 \item[1.] $w(E)<\varepsilon w(B_r(y_i))$ for all $i=1,...,L$, and 
 \item[2.] for all $x\in \Omega$, $\rho\in (0,2r]$, we have       
	 $w(E\cap B_\rho(x))\geq \varepsilon w(B_\rho(x)) \Longrightarrow B_\rho(x)\cap \Omega\subset F$. 
	\end{description}
 Then $
	w(E)\leq C \varepsilon w(F)$         
	for a constant $C$ depending only on $n$ and $[w]_{\mathbf{A}_\infty}$.
\end{lemma}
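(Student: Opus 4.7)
The argument I have in mind is a Vitali-type stopping-time decomposition of $E$, converted into an estimate in terms of $w(F)$ by means of the Reifenberg flatness of $\Om$ and the self-improving (reverse) property of $\mathbf{A}_\infty$ weights.

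\emph{Stopping radius.} First I would fix $x\in\Om$ and pick an index $i=i(x)$ with $x\in B_r(y_i)$; then $B_r(y_i)\subset B_{2r}(x)$, so hypothesis (1) and monotonicity of $w$ give
\[
w(E\cap B_{2r}(x))\le w(E)<\varepsilon\, w(B_r(y_i))\le \varepsilon\, w(B_{2r}(x)).
\]
Thus the $w$-density of $E$ at scale $2r$ is strictly below $\varepsilon$, while, since $\mathbf{A}_\infty$ weights are doubling, the Lebesgue differentiation theorem for $w$ gives density $\to 1$ as the scale shrinks, at $w$-a.e. point of $E$. For such $x$ I would define
\[
\rho_x:=\sup\{\rho\in(0,2r]: w(E\cap B_\rho(x))\ge \varepsilon\, w(B_\rho(x))\},
\]
which is well defined and lies strictly in $(0,2r)$ by continuity in $\rho$ of both $w(E\cap B_\rho(x))$ and $w(B_\rho(x))$ (spheres have zero $w$-measure since $w\,dx\ll dx$). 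Passing to a limit yields $w(E\cap B_{\rho_x}(x))\ge \varepsilon\, w(B_{\rho_x}(x))$, while $w(E\cap B_\rho(x))<\varepsilon\, w(B_\rho(x))$ for every $\rho\in(\rho_x,2r]$. Hypothesis (2) applied at $\rho_x\in(0,2r]$ then yields $B_{\rho_x}(x)\cap\Om\subset F$.

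\emph{Vitali covering and density on enlarged balls.} I would then apply the classical Vitali covering lemma to $\{B_{\rho_x}(x)\}$ (radii bounded by $2r$) and extract a countable disjoint subfamily $\{B_{\rho_j}(x_j)\}_j$, $\rho_j:=\rho_{x_j}$, such that $E\subset\bigcup_j B_{5\rho_j}(x_j)$ up to a $w$-null set. The key pointwise bound to establish is
\[
w(E\cap B_{5\rho_j}(x_j))\le C\varepsilon\, w(B_{\rho_j}(x_j)), \qquad C=C(n,[w]_{\mathbf{A}_\infty}),
\]
which I would prove by splitting on the size of $\rho_j$. If $5\rho_j\le 2r$, maximality of $\rho_j$ gives $w(E\cap B_{5\rho_j}(x_j))<\varepsilon\, w(B_{5\rho_j}(x_j))$, and doubling of $w$ yields $w(B_{5\rho_j}(x_j))\le C w(B_{\rho_j}(x_j))$. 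If $5\rho_j>2r$, then $\rho_j>2r/5$ is comparable to $r$, and with $i=i(x_j)$ we have $B_r(y_i)\subset B_{2r}(x_j)\subset B_{5\rho_j}(x_j)$; hypothesis (1) and doubling give $w(E\cap B_{5\rho_j}(x_j))\le w(E)<\varepsilon\, w(B_r(y_i))\le C\varepsilon\, w(B_{\rho_j}(x_j))$.

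\emph{Transfer to $F$ via Reifenberg flatness, and summation.} The main obstacle lies here: relating $w(B_{\rho_j}(x_j))$ to $w(F\cap B_{\rho_j}(x_j))$ uniformly in whether $x_j$ is interior to $\Om$ or lies near $\partial\Om$. Because $\rho_j\le 2r\le R_0/2$, $x_j\in\Om$, and $\delta<1/4$, the Reifenberg flatness forces a uniform Lebesgue lower bound
\[
|B_{\rho_j}(x_j)\cap\Om|\ge c_0(n)\, |B_{\rho_j}(x_j)|:
\]
trivial when $B_{\rho_j}(x_j)\subset\Om$; otherwise one picks a nearest boundary point $z$ of $x_j$ and uses the flatness of $\partial\Om$ at $z$ inside $B_{2\rho_j}(z)$ to exhibit a half-ball of definite Lebesgue measure contained in $\Om\cap B_{\rho_j}(x_j)$. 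The self-improving ($\mathbf{A}_\infty$-reverse) property of $w$ then upgrades this to the weighted lower bound $w(B_{\rho_j}(x_j)\cap\Om)\ge c(n,[w]_{\mathbf{A}_\infty})\, w(B_{\rho_j}(x_j))$, and since $B_{\rho_j}(x_j)\cap\Om\subset F$ we conclude $w(B_{\rho_j}(x_j))\le C\, w(F\cap B_{\rho_j}(x_j))$. Combining everything with disjointness of $\{B_{\rho_j}(x_j)\}$,
\[
w(E)\le \sum_j w(E\cap B_{5\rho_j}(x_j))\le C\varepsilon\sum_j w(B_{\rho_j}(x_j))\le C\varepsilon\sum_j w(F\cap B_{\rho_j}(x_j))\le C\varepsilon\, w(F),
\]
which is the claimed inequality.
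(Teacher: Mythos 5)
Your proof is correct. The paper does not prove this lemma itself but defers to \cite{55MePh2}, where the argument is the same one you give: a Lebesgue-density stopping radius $\rho_x$ plus the Vitali covering lemma, the measure-density estimate $|B_\rho(x)\cap\Omega|\geq c(n)|B_\rho(x)|$ from Reifenberg flatness, and the symmetric (reverse) form of the $\mathbf{A}_\infty$ condition to pass from $w(B_{\rho_j}(x_j))$ to $w(F\cap B_{\rho_j}(x_j))$ — so your proposal reproduces the standard proof.
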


\medskip

\noindent \begin{proof}[Proof of Theorem \ref{5hh23101312}] The proof  is reminiscent of that of \cite[Theorem 1.5]{QH4} (see also \cite[Theorem 1.4]{55Ph2}, \cite[Theorem 8.4]{55QH2}, and \cite[Theorem 3.1]{55QH3}).

Let $R=diam(\Omega)$. Suppose that $0<\gamma_1<\frac{n(p-1)}{n-1}$ and  $u$ is a renormalized solution of \eqref{5hh070120148}  such that $|\nabla u|\in L^{2-p}(\Om)$. By \cite[Theorem 4.1]{11DMOP} we have 
$$
	\|\nabla u\|_{L^{\frac{(p-1)n}{n-1},\infty}(\Omega)}\leq C\left[|\mu|(\Omega)\right]^{\frac{1}{p-1}},
$$
	which implies  that
$$
\left(	\frac{1}{R^n}\int_{\Omega}|\nabla u|^{\gamma_1}\right)^{1/\gamma_1}\leq C_{\gamma_1} \left[\frac{|\mu|(\Omega)}{R^{n-1}}\right]^{\frac{1}{p-1}}.
$$

For $k>0$, let $\mu_0,\lambda_k^+,\lambda_k^-$ be as in Definition \ref{derenormalized}. Let $u_k\in W_0^{1,p}(\Omega)$ be the unique solution of the equation
	\begin{equation*}
	\left\{
	\begin{array}[c]{rcl}
	-\text{div}(A(x,\nabla u_k))&=&\mu_{k} \quad \text{in } \quad\Omega,\\
	{u}_{k}&=&0\quad\text{on } \quad \partial\Omega,\\
	\end{array}
	\right.  
	\end{equation*}
	where we set $\mu_k=\chi_{\{|u|<k\}}\mu_0+\lambda_k^+-\lambda_k^-$. 
	 
Note that  we have  $u_k=T_k(u)$ and $\mu_k\rightarrow \mu$ 
in the narrow topology of measures (see \cite[Remark 2.32]{11DMOP}). Thus,  
	\begin{equation}\label{grad-appr}
	\nabla u_k\to \nabla u  \quad \text{in} \quad L^{\gamma_1}(\Omega)\cap L^{2-p}(\Om).
	\end{equation}
	
Let us set 
$$F_\lambda=\{ ({\bf M}(|\nabla u|^{\gamma_1}))^{1/\gamma_1}> \lambda\}\cap \Omega,$$
and
$$E_{\lambda,\delta_2}=\left\{({\bf M}(|\nabla u|^{\gamma_1}))^{1/\gamma_1}>\Lambda_0\lambda,  (\mathbf{M}_1(\mu))^{\frac{1}{p-1}}\le \delta_2\lambda \right\}\cap U_{\epsilon,\lambda}\cap \Omega,$$ 
where 
$$U_{\epsilon,\lambda}=\{({\bf M}(|\nabla u|^{2-p}))^{\frac{1}{2-p}}\leq \varepsilon^{-1}\lambda \},$$
and $\delta_2\in (0,1)$, $\lambda>0$. The constant  $\Lambda_0$  depends only on $n,p,\gamma_1,\Lambda$ and is to be  chosen  later.  
	
	Also, let $\{y_i\}_{i=1}^L\subset \Omega$ and a ball $B_0$ with radius $2R$ such that 
	 $$
	 \Omega\subset \bigcup\limits_{i = 1}^L {{B_{r_0}}({y_i})}  \subset {B_0},\qquad  {\rm where}\  r_0=\min\{R_0/1000,R\}.$$

	As in the proof of  \cite[Theorem 1.5]{QH4}, we have 
	 \begin{equation}\label{5hh2310131}
	 w(E_{\lambda,\delta_2})\leq \varepsilon w({B_{r_0}}({y_i})) ~~\forall \lambda>0, \forall i=1,2,\dots,L,
	 \end{equation}
	 provided $\delta_2=\delta_2(n,p,\Lambda,\epsilon,[w]_{\mathbf{A}_\infty},R/R_0)>0$ is small enough.

	In order to apply Lemma \ref{5hhvitali2} we now verify that for all $x\in \Omega$, $r\in (0,2r_0]$, and $\lambda>0$ we have
	 \begin{equation}\label{2nd-check}
	 w(E_{\lambda,\delta_2}\cap B_r(x))\geq \varepsilon w(B_r(x)) \Longrightarrow B_r(x)\cap \Omega\subset F_\lambda,
	 \end{equation}
	 provided $\delta_2$ is small enough depending on $n,p,\Lambda, \gamma_0,\epsilon,[w]_{\mathbf{A}_\infty},R/R_0$ .

	Indeed,  take $x\in \Omega$ and $0<r\leq 2r_0$.
	 By contraposition, assume that $B_r(x)\cap \Omega\cap F^c_\lambda\not= \emptyset$ and $E_{\lambda,\delta_2}\cap B_r(x)\not = \emptyset$, i.e., there exist $x_1,x_2\in B_r(x)\cap \Omega$ such that 
	 \begin{equation}\label{Mx1}
	 \left[{\bf M}(|\nabla u|^{\gamma_1})(x_1)\right]^{1/\gamma_1}\leq \lambda,
	 \end{equation}
	  and 
	  \begin{equation}\label{MM1x2}
	  {\bf M}\left(|\nabla u|^{2-p}\right)(x_2)\leq (\varepsilon^{-1}\lambda)^{2-p}, \quad  \mathbf{M}_1(\mu)(x_2)\le (\delta_2 \lambda)^{p-1}.
	 \end{equation}

	 We need to prove that
	 \begin{equation}\label{5hh2310133}
	 w(E_{\lambda,\delta_2}\cap B_r(x))< \varepsilon w(B_r(x)). 
	 \end{equation}

	 It follows from \eqref{Mx1} that
$$
	 {\bf M}(|\nabla u|^{\gamma_1})(y)^{\frac{1}{\gamma_1}}\leq \max\{\left[{\bf M}\left(\chi_{B_{2r}(x)}|\nabla u|^{\gamma_1}\right)(y)\right]^{\frac{1}{\gamma_1}},3^{n}\lambda\}~~\forall y\in B_r(x).
$$

 Therefore, for all $\lambda>0$ and $\Lambda_0\geq 3^{n}$, we find
\begin{align}
&E_{\lambda,\delta_2}\cap B_r(x)\nonumber\\
&=\left\{{\bf M}\left(\chi_{B_{2r}(x)}|\nabla u|^{\gamma_1}\right)^{\frac{1}{\gamma_1}}>\Lambda_0\lambda, (\mathbf{M}_{1}(\mu))^{\frac{1}{p-1}}\leq \delta_2\lambda\right\}\cap U_{\epsilon, \lambda} \cap \Omega \cap B_r(x). \label{EBr}
\end{align}

	 To prove \eqref{5hh2310133} we separately consider  the case $B_{8r}(x)\subset\subset\Omega$ and the case $B_{8r}(x)\cap\Omega^{c}\not=\emptyset$.
	
	\medskip
	
	 \noindent {\bf 1. The case $B_{8r}(x)\subset\subset\Omega$:} Applying  Proposition \ref{inter} to  $u=u_{k}\in W_{0}^{1,p}(\Omega),\mu=\mu_k$ and $B_{2R}=B_{8r}(x)$, there is a function $v_k\in W^{1,p}(B_{4r}(x))\cap W^{1,\infty}(B_{2r}(x))$ such that for any $\eta>0$, 
	 \begin{align*}
	& \|\nabla v_k\|_{L^\infty(B_{2r}(x))}\leq C_\eta \left[\frac{|\mu_k|(B_{8r}(x))}{r^{n-1}}\right]^{\frac{1}{p-1}}\\&~~~~~+C \left(\fint_{B_{8r}(x)}|\nabla u_k|^{\gamma_1}\right)^{\frac{1}{\gamma_1}}+\eta \left(\fint_{B_{8r}(x)}|\nabla u_k|^{2-p}\right)^{\frac{1}{2-p}},
	 \end{align*}
	 and
	 \begin{align*}
	 &\left(\fint_{B_{4r}}|\nabla u_k-\nabla v_k|^{\gamma_1}dx\right)^{\frac{1}{\gamma_1}}
         \leq C_\eta \left[\frac{|\mu_k|(B_{8r}(x))}{r^{n-1}}\right]^{\frac{1}{p-1}}\\&\quad\quad+C(([A]_{R_0})^{\kappa} +\eta)\left(\fint_{B_{8r}}|\nabla u_k|^{\gamma_1}\right)^{\frac{1}{\gamma_1}}+\eta \left(\fint_{B_{8r}(x)}|\nabla u_k|^{2-p}\right)^{\frac{1}{2-p}},
	 \end{align*}
	 for some $\kappa\in (0,1)$.
	
	 Notice that using \eqref{Mx1}, \eqref{MM1x2}, and property \eqref{grad-appr},
			 we get 
	 \begin{align*}
	 &\mathop {\limsup }\limits_{k \to \infty } \|\nabla v_{k}\|_{L^\infty(B_{2r}(x))}\\&\leq   C_\eta \left[\frac{|\mu|(\overline{B_{8r}(x)})}{r^{n-1}}\right]^{\frac{1}{p-1}}+C \left(\fint_{B_{8r}(x)}|\nabla u|^{\gamma_1}\right)^{\frac{1}{\gamma_1}} +\eta \left(\fint_{B_{8r}(x)}|\nabla u|^{2-p}\right)^{\frac{1}{2-p}}   \\&\leq   C_\eta [\mathbf{M}_1(\mu)(x_2)]^{\frac{1}{p-1}}+C \left[{\bf M}(|\nabla u|^{\gamma_1})(x_1)\right]^{\frac{1}{\gamma_1}}                     +C\eta \left[{\bf M}(|\nabla u|^{2-p})(x_2)\right]^{\frac{1}{2-p}}   
	 \\&\leq \left[C_\eta\delta_2+C+C\eta\epsilon^{-1}\right]\lambda\leq C_1\lam,
	 \end{align*}
	provided  $C_\eta\delta_2, \eta\epsilon^{-1}\leq 1$, and 
	 \begin{align*}
	 &\mathop {\limsup }\limits_{k \to \infty }  \left(\fint_{B_{4r}(x)}|\nabla u_k-\nabla v_k|^{\gamma_1}dx\right)^{\frac{1}{\gamma_1}}\leq C_\eta \left[\frac{|\mu|(\overline{B_{8r}(x)})}{r^{n-1}}\right]^{\frac{1}{p-1}}\\
	 &\qquad \qquad+C(([A]_{R_0})^{\kappa} +\eta)      \left(\fint_{B_{8r}(x)}|\nabla u|^{\gamma_1}\right)^{\frac{1}{\gamma_1}}+\eta \left(\fint_{B_{8r}(x)}|\nabla u|^{2-p}\right)^{\frac{1}{2-p}} \\
	 &\qquad\leq C_\eta [\mathbf{M}_1(\mu)(x_2)]^{\frac{1}{p-1}}+C(([A]_{R_0})^{\kappa} +\eta) \left[{\bf M}(|\nabla u|^{\gamma_1})(x_1)
         \right]^{\frac{1}{\gamma_1}}  \\
      &\qquad \qquad          +C\eta\left[{\bf M}(|\nabla u|^{2-p})(x_2)
         \right]^{\frac{1}{2-p}}   
	 \\&\qquad\leq C\left(C_{\eta}\delta_2+\delta_1^{\kappa}+\eta\varepsilon^{-1}\right)\lambda.
	 \end{align*}                                                                
	Here  we also used that $\mu_k\rightarrow \mu$ in the narrow topology of measures
	and  	$[A]_{R_0}\leq \delta_1$.

	 Thus there exists $k_0>1$ such that for all $k\geq k_0$ we have  
	 \begin{equation}\label{5hh2310136}
	 \|\nabla v_{k}\|_{L^\infty(B_{2r}(x))}\leq 2 C_1\lambda,
	 \end{equation}                        
and
	 \begin{equation}\label{5hh2310137} 
	 \left(\fint_{B_{4r}(x)}|\nabla u_k-\nabla v_k|^{\gamma_1}dx\right)^{\frac{1}{\gamma_1}}\leq C\left(C_{\eta}\delta_2+\delta_1^{\kappa}+\eta\varepsilon^{-1}\right)\lambda.
	 \end{equation}      
	   
	   Note that by \eqref{EBr} we find
	  \begin{align}\nonumber
	  |E_{\lambda,\delta_2}\cap B_r(x)|&\leq   |\{{\bf M}\left(\chi_{B_{2r}(x)}|\nabla (u_k-v_k)|^{\gamma_1}\right)^{\frac{1}{\gamma_1}}>\Lambda_0\lambda/9\}\cap B_r(x)|
	  \\&\nonumber+ |\{{\bf M}\left(\chi_{B_{2r}(x)}|\nabla (u-u_k)|^{\gamma_1}\right)^{\frac{1}{\gamma_1}}>\Lambda_0\lambda/9\}\cap B_r(x)|\\&+
	  |\{{\bf M}\left(\chi_{B_{2r}(x)}|\nabla v_k|^{\gamma_1}\right)^{\frac{1}{\gamma_1}}>\Lambda_0\lambda/9\}\cap B_r(x)|.         \label{es18}                 
	  \end{align}  

 On the other hand,	 in view of \eqref{5hh2310136} we see that for $\Lambda_0\geq \max\{3^{n},20C_1\}$ ($C_1$ is the constant in \eqref{5hh2310136}) and $k\geq k_0$, it holds that 
$$
	 |\{{\bf M}\left(\chi_{B_{2r}(x)}|\nabla v_k|^{\gamma_1}\right)^{\frac{1}{\gamma_1}}>\Lambda_0\lambda/9\}\cap B_r(x)|=0.$$

Thus, we deduce from \eqref{5hh2310137} and \eqref{es18}  that for any $k\geq k_0$,
	 \begin{align*}
	 |E_{\lambda,\delta_2}\cap B_r(x)|&\leq \frac{C}{\lambda^{\gamma_1}}    \left[\int_{B_{2r}(x)}  |\nabla (u_k-v_k)|^{\gamma_1}+ \int_{B_{2r}(x)}  |\nabla (u-u_k)|^{\gamma_1} \right]    
	 \\&\leq \frac{C}{\lambda^{\gamma_1}}    \left[\left(C_{\eta}\delta_2+\delta_1^{\kappa}+\eta\varepsilon^{-1}\right)^{\gamma_1}\lambda^{\gamma_1}r^n+ \int_{B_{2r}(x)}  |\nabla (u-u_k)|^{\gamma_1} \right].
	 \end{align*}

	 Then letting $k\to\infty$  we get 
$$
	 |E_{\lambda,\delta_2}\cap B_r(x)|\leq C \left(C_{\eta}\delta_2+\delta_1^{\kappa}+\eta\varepsilon^{-1}\right)^{\gamma_1}|B_r(x)|.
$$

	 This gives  
	 \begin{align*}
	 w(E_{\lambda,\delta_2}\cap B_r(x))&\leq c\left(\frac{|E_{\lambda,\delta_2}\cap B_r(x) |}{|B_r(x)|}\right)^\nu w(B_r(x))
	 \\&\leq  c\left(C_{\eta}\delta_2+\delta_1^{\kappa}+\eta\varepsilon^{-1}\right)^{\gamma_1\nu} w(B_r(x))
	 \\&< \varepsilon w(B_r(x)),
	 \end{align*} 
	 where $\eta,\delta_1\leq C(n,p, \Lambda,\gamma_1,\epsilon,[w]_{\mathbf{A}_\infty})$ and
	$\delta_2\leq C(n,p, \Lambda,\gamma_1,\epsilon,[w]_{\mathbf{A}_\infty}, R/R_0)$.
	
	\medskip
	
	 \noindent {\bf 2. The case $B_{8r}(x)\cap\Omega^{c}\not=\emptyset$:} Let $x_3\in\partial \Omega$ such that $|x_3-x|=\text{dist}(x,\partial\Omega)$.  We have 
$$
	 B_{2r}(x)\subset B_{10r}(x_3)\subset B_{100r}(x_3)\subset B_{108r}(x)\subset B_{109r}(x_1),
$$
	 and 
$$
	 B_{100r}(x_3)\subset B_{108r}(x)\subset B_{109r}(x_2).$$

	  Applying  Proposition \ref{boundary} to  $u=u_{k}\in W_{0}^{1,p}(\Omega),\mu=\mu_k$ and $B_{10R}=B_{100r}(x_3)$, 	for any $\eta>0$ there exists $\delta_0=\delta_0(n,p,\Lambda,\eta)$ such that the following holds. If $\Omega$ is a $(\delta_0,R_0)$-Reifenberg flat domain, there exists a function $V_k\in W^{1,\infty}(B_{10r}(x_3))$  such that 
	  \begin{align*}
	  \|\nabla V_k\|_{L^\infty(B_{10r}(x_3))}&\leq C_\eta \left[\frac{|\mu_k|(B_{100r}(x_3))}{r^{n-1}}\right]^{\frac{1}{p-1}}+C \left(\fint_{B_{100r}(x_3)}|\nabla u_k|^{\gamma_1}\right)^{\frac{1}{\gamma_1}}\\&+\eta \left(\fint_{B_{100r}(x_3)}|\nabla u_k|^{2-p}\right)^{\frac{1}{2-p}},
	  \end{align*}
and		
	  \begin{align*}\nonumber &
	  \left(\fint_{B_{10r}(x_3)}|\nabla (u_k-V_k)|^{\gamma_1}dx\right)^{\frac{1}{\gamma_1}}\leq C_\eta \left[\frac{|\mu_k|(B_{100r}(x_3))}{r^{n-1}}\right]^{\frac{1}{p-1}}\\&+C(([A]_{R_0})^{\kappa} +\eta)\left(\fint_{B_{100r}(x_3)}|\nabla u_k|^{\gamma_1}\right)^{\frac{1}{\gamma_1}}+\eta \left(\fint_{B_{100r}(x_3)}|\nabla u_k|^{2-p}\right)^{\frac{1}{2-p}},
	  \end{align*}
	  for some  $\kappa\in (0,1)$.
As above, we also obtain 
	\begin{equation*}
	|E_{\lambda,\delta_2}\cap B_r(x)|\leq C \left(C_{\eta}\delta_2+\delta_1^{\kappa}+\eta\varepsilon^{-1}\right)^{\gamma_1}|B_r(x)|,
	\end{equation*}
and thus
        \begin{align*}
	 w(E_{\lambda,\delta_2}\cap B_r(x))&\leq c\left(\frac{|E_{\lambda,\delta_2}\cap B_r(x) |}{|B_r(x)|}\right)^\nu w(B_r(x))
	 \\&\leq  c\left(C_{\eta}\delta_2+\delta_1^{\kappa}+\eta\varepsilon^{-1}\right)^{\gamma_1\nu} w(B_r(x))
	 \\&< \varepsilon w(B_r(x)).
	 \end{align*} 
	 where $\eta,\delta_1\leq C(n,p,\Lambda,\gamma_1,\varepsilon,[w]_{\mathbf{A}_\infty})$ and 
	$\delta_2\leq C(n,p,\Lambda,\gamma_1,\varepsilon,[w]_{\mathbf{A}_\infty}, R/R_0)$.

Using \eqref{5hh2310131} and \eqref{2nd-check},	we can now apply Lemma \ref{5hhvitali2} with $E= E_{\lambda, \delta_2}$ and $F=F_\lambda$ to complete the proof of the theorem.                         
\end{proof}

\section{Proof of Theorem \ref{101120143}}\label{sec-4}

This section is devoted to the proof of Theorem \ref{101120143}. Our main tools are  good-$\lambda$ type bounds
obtained in Theorem  \ref{5hh23101312} and  stability results of renormalized solutions obtained in 
\cite[Theorem 3.2]{11DMOP} 

\medskip
 
\noindent \begin{proof}[Proof of Theorem \ref{101120143}] 
	Let $R_0>0$ and fix a number $\gamma_1\in \left(0,\frac{(p-1)n}{n-1}\right)$. Suppose for now that $u$ is a renormalized solution of \eqref{5hh070120148} such that 
	$|\nabla u|\in L^{q}_w(\Om)$, $q>2-p$.
	
	By Theorem \ref{5hh23101312}, for any $\varepsilon>0,R_0>0$ one can find $\delta=\delta(n,p,\Lambda,\varepsilon,[w]_{{\bf A}_\infty})\in (0,1/2)$, $\delta_2=\delta_2(n,p,\Lambda,\varepsilon,[w]_{{\bf A}_\infty},diam(\Omega)/R_0)\in (0,1)$, and $\Lambda_0=\Lambda_0(n,p,\Lambda)>1$ such that if $\Omega$ is  a $(\delta,R_0)$-Reifenberg flat domain and $[A]_{R_0}\le \delta$ then 
	\begin{align*}
	&w(\{( {\bf M}(|\nabla u|^{\gamma_1}))^{\frac{1}{\gamma_1}}>\Lambda_0\lambda,{\bf M}(|\nabla u|^{2-p}))^{\frac{1}{2-p}}\leq \varepsilon^{-1}\lambda, (\mathbf{M}_1(\mu))^{\frac{1}{p-1}}\le \delta_2\lambda \}\cap \Omega) \nonumber\\
	&\qquad\leq C\varepsilon w(\{ ({\bf M}(|\nabla u|^{\gamma_1}))^{1/\gamma_1}> \lambda\}\cap \Omega), 
	\end{align*}
	for all $\lambda>0$. Here  the constant $C$  depends only on $n,p,\Lambda,[w]_{{\bf A}_\infty}$, and $diam(\Omega)/R_0$.

Thus, we find 	
\begin{align}
	&w(\{({\bf M}(|\nabla u|^{\gamma_1}))^{\frac{1}{\gamma_1}}>t\}\cap \Omega)\le 	w(\{ (\mathbf{M}_1(\mu))^{\frac{1}{p-1}}> \frac{\delta_2}{\Lambda_0} t \}\cap \Omega)\nonumber
	\\&+w(\{({\bf M}(|\nabla u|^{2-p}))^{\frac{1}{2-p}}>\frac{\varepsilon^{-1}}{\Lambda_0} t\}\cap \Omega)+ C\varepsilon w(\{ ({\bf M}(|\nabla u|^{\gamma_1}))^{\frac{1}{\gamma_1}}> \frac{t}{\Lambda_0}\}\cap \Omega) \label{lamepsi}
\end{align}
for all $t>0$. This gives, 
\begin{align*}
 & \|{\bf M}(|\nabla u|^{\gamma_1}))^{1/\gamma_1}\|_{L^{q}_w(\Omega)}\leq C\delta_2^{-1}   \|(\mathbf{M}_1(\mu))^{\frac{1}{p-1}}\|_{L^{q}_w(\Omega)}\\&+C\varepsilon  \|({\bf M}(|\nabla u|^{2-p}))^{1/(2-p)}\|_{L^{q}_w(\Omega)}+C\varepsilon  \|({\bf M}(|\nabla u|^{\gamma_1}))^{1/\gamma_1}\|_{L^{q}_w(\Omega)}.
\end{align*}

Using the boundedness of  ${\bf M}$ on  $L^{q/(2-p)}_w(\mathbb{R}^n)$, where $q/(2-p)>1$ and $w\in A_{\frac{q}{2-p}}$, and choosing $\epsilon<\frac{1}{2C}$ in the last inequality we deduce
$$\|{\bf M}(|\nabla u|^{\gamma_1}))^{1/\gamma_1}\|_{L^{q}_w(\Omega)}\leq 2C\delta_2^{-1}   \|(\mathbf{M}_1(\mu))^{\frac{1}{p-1}}\|_{L^{q}_w(\Omega)}+C'\varepsilon  \|\nabla u\|_{L^{q}_w(\Omega)}.
$$

Thus with $\varepsilon=\frac{1}{4(C+C')}$ we conclude that 
\begin{equation}\label{apbound}
 \|\nabla u\|_{L^{q}_w(\Omega)}\leq \widetilde{C} \ \|(\mathbf{M}_1(\mu))^{\frac{1}{p-1}}\|_{L^{q}_w(\Omega)}.
\end{equation}

To show existence, let $B_{R_1}(x_1)$ be a ball such that $\Om\Subset B_{R_1}(x_1)$ and extend $\mu$ by zero outside $\Om$. Then we can write 
$$\mu= f- {\rm div} F + \mu_s^{+} - \mu_s^{-},$$ 
as distributions in $B_{R_1+1}(x_1)$, where $f\in L^1(B_{R_1+1}(x_1))$, $F\in L^{\frac{p}{p-1}}(B_{R_1+1}(x_1),\RR^n)$, and $\mu_s$ is concentrated on a set of zero 
$p$-capacity. Let $\rho_\epsilon(x)=\epsilon^{-n} \rho(x/\epsilon)$ where $\rho\in C_c^\infty(B_1(0))$ is a nonnegative radial function with
$\|\rho\|_{L^1(\RR^n)}=1$. Then for any $\epsilon\in (0,1)$ we have 
$$\rho_\epsilon*\mu= \rho_\epsilon*f- {\rm div} (\rho_\epsilon*F) + \rho_\epsilon*\mu_s^{+} - \rho_\epsilon*\mu_s^{-}$$ 
as distributions in $B_{R_1}(x_1)$.

Let $u_\epsilon\in W^{1,p}_{0}(\Omega)$ be the unique solution of 
\begin{eqnarray*}
\left\{ \begin{array}{rcl}
-{\rm div}(A(x, \nabla u))&=& \rho_\epsilon*\mu \quad \text{in} ~\Omega, \\
u&=&0  \quad \text{on}~ \partial \Omega.
\end{array}\right.
\end{eqnarray*} 																																						

Then we can deduce from \cite[Theorem 1.10]{55MePh3} that $|\nabla u_\epsilon|\in L^{q}_w(\Om)$ provided $\delta=\delta(n,p,\Lambda, q,s, [w]_{\mathbf{A}_{\frac{q}{2-p}}})$ is sufficiently small.  Thus we may apply \eqref{apbound} and get 
\begin{align*}
\|\nabla u_\epsilon\|_{L^{q}_w(\Omega)} & \leq C  \|(\mathbf{M}_1(\rho_\epsilon*\mu))^{\frac{1}{p-1}}\|_{L^{q}_w(\Omega)}\\
& \leq C \|\left(\mathbf{M}[\mathbf{M}_1(\mu)]\right)^{\frac{1}{p-1}}\|_{L^{q}_w(\Omega)}\\
& \leq C \|(\mathbf{M}_1(\mu)^{\frac{1}{p-1}}\|_{L^{q}_w(\Omega)}.
\end{align*}

The theorem now follows from the stability result of  \cite[Theorem 3.2]{11DMOP}. 
\end{proof}

\section{Proof of Theorem \ref{main-Ric}}\label{sec-5}

We will need the following important compactness result.

\begin{lemma}\label{compactness} Suppose that  $1<p\leq \frac{3n-2}{2n-1}$. For each $j>0$, let $\mu_j \in \mathfrak{M}_0(\Omega)$ and $u_j$ be the solution of \eqref{5hh070120148} with datum $\mu=\mu_j$ in $\Omega$.
	Assume that 
	$\{\left[\mathbf{M}_1(\mu_j)\right]^{\frac{q}{p-1}}\}_{j}$, $q>2-p$, is a bounded and equi-integrable subset of  $L^1(\Omega)$.
	Then, there exists  $\delta=\delta(n,p,\Lambda, q)\in (0,1)$ such that if $\Omega$ is  $(\delta,R_0)$-Reifenberg flat  and $[A]_{R_0}\le \delta$ for some $R_0>0$, then there exist a subsequence 
	$\{u_{j'}\}_{j'}$ and a finite a.e. function $u$ with the property that $T_k(u)\in W^{1,p}_0(\Omega)$ for all $k>0$, $u_{j'}\rightarrow u$ a.e., and 
	\begin{equation}\label{strong-q-w}
	\nabla u_{j'} \rightarrow \nabla u \quad \text{strongly in} \quad L^q(\Omega, \mathbb{R}^n).
	\end{equation}
\end{lemma}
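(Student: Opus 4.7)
The plan is to combine the a priori weighted gradient bound of Theorem~\ref{101120143}, the good-$\lambda$ inequality of Theorem~\ref{5hh23101312}, and the stability theorem \cite[Theorem 3.2]{11DMOP} for renormalized solutions. As a starting point, I take the trivial weight $w\equiv 1$, which lies in $\mathbf{A}_{q/(2-p)}$ since $q>2-p$, and apply Theorem~\ref{101120143} to each $u_j$:
$$\|\nabla u_j\|_{L^q(\Omega)}\leq C\,\|[\mathbf{M}_1(\mu_j)]^{\frac{1}{p-1}}\|_{L^q(\Omega)}\leq C,$$
the final inequality following from the boundedness of $\{[\mathbf{M}_1(\mu_j)]^{q/(p-1)}\}$ in $L^1(\Omega)$. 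Consequently $\{|\mu_j|(\Omega)\}$ is bounded as well (via $|\mu_j|(\Omega)\leq R^{n-1}\mathbf{M}_1(\mu_j)(x)$ for $R\geq \mathrm{diam}(\Omega)$ and integration in $x$), so Banach--Alaoglu produces a subsequence with $\mu_j\rightharpoonup \mu$ weakly-$*$ in $\mathfrak{M}_b(\Omega)$. The stability theorem \cite[Theorem 3.2]{11DMOP} then yields, along a further subsequence, a measurable function $u$ finite a.e.\ such that $T_k(u)\in W_0^{1,p}(\Omega)$ for every $k>0$, $u_j\to u$, and $\nabla u_j\to \nabla u$ a.e.\ in $\Omega$.

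The crux is to upgrade this a.e.\ convergence to strong $L^q$ convergence, which via Vitali's theorem reduces to showing equi-integrability of $\{|\nabla u_j|^q\}$ in $L^1(\Omega)$. Combining the pointwise bound $|\nabla u_j|\leq [\mathbf{M}(|\nabla u_j|^{\gamma_1})]^{1/\gamma_1}$ (valid a.e.\ by Lebesgue differentiation) with the good-$\lambda$ inequality \eqref{101120144} ($w\equiv 1$) integrated against $qt^{q-1}\,dt$, and restricting level sets to a measurable $E\subset \Omega$ with splitting at threshold $T>0$, yields
\begin{align*}
\int_E |\nabla u_j|^q\,dx &\leq C T^q|E| + C\int_{\{[\mathbf{M}_1(\mu_j)]^{1/(p-1)}>cT\}}[\mathbf{M}_1(\mu_j)]^{\frac{q}{p-1}}\,dx\\
&\quad+ C\varepsilon^{q}\int_\Omega [\mathbf{M}(|\nabla u_j|^{2-p})]^{\frac{q}{2-p}}\,dx + C\varepsilon \int_\Omega |\nabla u_j|^q\,dx.
\end{align*}
By the uniform $L^q$ bound above and the Hardy--Littlewood maximal inequality on $L^{q/(2-p)}$ (valid because $q>2-p$), the last two terms are at most $C(\varepsilon^q+\varepsilon)$; fixing $\varepsilon$ small, the second term then becomes arbitrarily small as $T\to\infty$ uniformly in $j$ by the equi-integrability of $\{[\mathbf{M}_1(\mu_j)]^{q/(p-1)}\}$; finally fixing such $T$ and letting $|E|\to 0$ makes the first term small.

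The main obstacle is precisely controlling the auxiliary maximal-function term $\mathbf{M}(|\nabla u|^{2-p})$ appearing in \eqref{101120144}---a contribution intrinsic to the strongly singular range $1<p\leq (3n-2)/(2n-1)$ that is absent from the higher-$p$ analogue in \cite{QH4}. The hypothesis $q>2-p$ is exactly what permits its control, by placing $q/(2-p)$ above the threshold for boundedness of the maximal operator. Once equi-integrability of $\{|\nabla u_j|^q\}$ is secured, Vitali's theorem combined with the a.e.\ convergence from the stability step yields the desired strong convergence $\nabla u_j\to \nabla u$ in $L^q(\Omega,\mathbb{R}^n)$.
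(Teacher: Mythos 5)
Your overall skeleton (uniform a priori bound, a.e.\ convergence of gradients via the stability machinery of \cite{11DMOP}, then Vitali) is the same as the paper's, but the equi-integrability step --- the heart of the lemma --- has a genuine gap. When you integrate the good-$\lambda$ inequality \eqref{101120144} against $qt^{q-1}\,dt$, the term $C\varepsilon\, \left|\{({\bf M}(|\nabla u_j|^{\gamma_1}))^{1/\gamma_1}>t\}\cap\Omega\right|$ and the term coming from the complement of $U_{\varepsilon,t}$ are measures of level sets over \emph{all of} $\Omega$, not over $E$. After integration they produce $C\varepsilon\|\nabla u_j\|_{L^q(\Omega)}^q$ and $C\varepsilon^q\|({\bf M}(|\nabla u_j|^{2-p}))^{1/(2-p)}\|_{L^q(\Omega)}^q$, i.e.\ a remainder of size $C_0(\varepsilon+\varepsilon^q)$ with $C_0$ the uniform $L^q$ bound. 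This remainder does not shrink as $|E|\to 0$ or $T\to\infty$; your argument therefore only yields $\limsup_{|E|\to0}\sup_j\int_E|\nabla u_j|^q\le C_0(\varepsilon+\varepsilon^q)>0$. You cannot then send $\varepsilon\to 0$, because in Theorem \ref{5hh23101312} the flatness/BMO parameter $\delta_1$ depends on $\varepsilon$, while in Lemma \ref{compactness} the parameter $\delta=\delta(n,p,\Lambda,q)$ must be fixed once and for all; letting $\varepsilon\to0$ would require $\Omega$ to be Reifenberg flat with vanishing constant, a stronger hypothesis than the one stated.

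The paper closes exactly this gap with a de la Vall\'ee-Poussin/Orlicz argument. One first extracts a single convex, superlinear, moderately growing $G$ with $\sup_j\int_\Omega G([\mathbf{M}_1(\mu_j)]^{q/(p-1)})\,dx\le C$ (this is where the equi-integrability of the data is used), sets $\Phi(t)=G(t^q)$, and runs the good-$\lambda$ inequality at level $\Phi^{-1}(t)$ with one \emph{fixed} small $\varepsilon$. The troublesome term $2\int_\Omega\Phi[\varepsilon\Lambda_0({\bf M}(|\nabla u_j|^{2-p}))^{1/(2-p)}]\,dx$ is then bounded by $C\Lambda_0\varepsilon\int_\Omega\Phi(|\nabla u_j|)\,dx$ using the $\nabla_2$ property of $t\mapsto G(t^{q/(2-p)})$, and --- crucially --- \emph{absorbed} into the left-hand side (this requires the qualitative finiteness $\int_\Omega\Phi(|\nabla u_j|)\,dx<\infty$, obtained by smoothing $\mu_j$ and invoking \cite{55MePh3} together with uniqueness). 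The resulting uniform bound $\sup_j\int_\Omega G(|\nabla u_j|^q)\,dx\le C$ for a single superlinear $G$ gives equi-integrability of $\{|\nabla u_j|^q\}$ by de la Vall\'ee-Poussin, with $\varepsilon$ (hence $\delta$) fixed throughout. You would need to incorporate this absorption-in-Orlicz-norm step (or an equivalent device) for your proof to go through.
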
 
 \begin{proof} By de la Vall\'ee-Poussin Lemma on equi-integrability, there exists  a strictly increasing and convex  function 
	$G:[0,\infty)\rightarrow [0,\infty)$ with $G(0)=0$ such that $\lim_{t\rightarrow \infty} G(t)/t=\infty$
	and
$$
	\sup_{j} \int_{\Omega} G([{\bf M}_1(|\mu_j|)]^{\frac{q}{p-1}}) w dx \leq C.
$$
	Moreover, we may assume that $G$ satisfies a moderate growth condition (see \cite{Mey}): there exists $c_1>1$ such that 
	$$G(2t)\leq c_1\, G(t)\qquad \forall t\geq 0.$$ 
	
	Let $\Phi(t):=G(t^q)$, where $q>2-p>p-1$. 	Then applying \eqref{lamepsi} with $w=1$ and with $\Phi^{-1}(t)$ in place of $t$ we find	
	\begin{align*}
	\left|\{\Phi[({\bf M}(|\nabla u_j|^{\gamma_1}))^{\frac{1}{\gamma_1}}]>t\}\cap \Omega\right|& \le 	\left|\{ \Phi[\frac{\Lambda_0}{\delta_2}(\mathbf{M}_1(\mu_j))^{\frac{1}{p-1}}]>  t \}\cap \Omega\right|\\
	& \quad + \left|\{ \Phi[\varepsilon \Lambda_0({\bf M}(|\nabla u_j|^{2-p}))^{\frac{1}{2-p}}]> t\}\cap \Omega\right|\\
	&  \quad + C\varepsilon \left| \{  \Phi[ \Lambda_0 ({\bf M}(|\nabla u_j|^{\gamma_1}))^{\frac{1}{\gamma_1}}]> t\}\cap \Omega\right|
	\end{align*}
	for any $\epsilon>0$. Here $\delta_2$ depends on $\epsilon$, but $\Lambda_0$ and $C$ do not.
	
	Then arguing as in the proof of \cite[Theorem 1.4]{QH4} we find
	\begin{align}
	\int_{\Om} \Phi[(\mathbf{M}(|\nabla u_j|^{\gamma_1}))^{\frac{1}{\gamma_1}}] dx &\leq H(\epsilon) \int_{\Om} \Phi[\mathbf{M}_{1}(\mu_j)^{\frac{1}{p-1}}] dx\nonumber\\
	& \qquad + 2  \int_{\Om} \Phi[ \epsilon \Lambda_0(\mathbf{M}(|\nabla u_j|^{2-p}))^{\frac{1}{2-p}}] dx \label{mot}
	\end{align}
	for all sufficiently small $\epsilon>0$.

	Note that  by approximation as in the proof of Theorem \ref{101120143}  and by uniqueness (see Remark \ref{uniq}), we may assume that $\mu_j\in C^\infty(\overline{\Omega})$. Thus by the result of  \cite[Theorem 1.10]{55MePh3}, we may assume that
	\begin{equation}\label{hai}
	\int_{\Om} \Phi(|\nabla u_j|) dx <+\infty.
	\end{equation}
	 
	Now as the function $t\mapsto \Phi(t^{\frac{1}{2-p}})=G(t^{\frac{q}{2-p}})$ satisfies the $\nabla_2$ condition (see \cite{Ph1-1}), we deduce (see, e.g., \cite{Ga, BK}) that 
	\begin{equation}\label{ba}
	2 \int_{\Om} \Phi[ \epsilon \Lambda_0(\mathbf{M}(|\nabla u_j|^{2-p}))^{\frac{1}{2-p}}] dx\leq C \Lambda_0 \epsilon  
	\int_{\Om} \Phi(|\nabla u_j|) dx.
	\end{equation}
	
	Combining \eqref{mot}, \eqref{hai}, \eqref{ba}, and choosing $\epsilon$ sufficiently small we arrive at 
$$
	\int_{\Omega}G(|\nabla u_j|^q)dx \leq C \int_{\Omega} G([\mathbf{M}_1(\mu_j)]^{\frac{q}{p-1}})dx\leq C.
$$
	
	Thus by de la Vall\'ee-Poussin Lemma the set   $\{|\nabla u_j|^q\}_j$ is  also bounded and equi-integrable in   $L^1(\Omega)$.
	
	On the other hand,  it follows from the proof of  \cite[Theorem 3.4]{11DMOP} that there exists a subsequence $\{u_{j'}\}_{j'}$ converging a.e. to  a  function $u$ such that $|u|<\infty$ a.e.,  $T_{k}(u)\in W^{1, p}_0(\Omega)$ for all $k>0$, and  moreover
$$\nabla u_{j'} \rightarrow \nabla u \quad {\rm a.e.~ in~} \Omega.
$$
	
	At this point,  applying  Vitali Convergence Theorem we obtain the strong convergence \eqref{strong-q-w} as desired.  
\end{proof}

\vspace{.2in}

\noindent \begin{proof}[Proof of Theorem \ref{main-Ric}] The proof of Theorem \ref{main-Ric} is based on Schauder Fixed Point Theorem using Lemma \ref{compactness} and Theorem  \ref{101120143}. Indeed, with these results at hand, the proof is similar to that of \cite[Theorem 1.9]{QH4}, and thus we  omit the details. 

\end{proof}	

\vspace{.2in}

\noindent {\large \bf Acknowledgment.} Quoc-Hung Nguyen is  supported  by the ShanghaiTech University startup fund. Nguyen Cong Phuc is supported in part by Simons Foundation, award number 426071.


\begin{thebibliography}{99}
%\bibitem{AH} D. R. Adams  and   L. I. Hedberg, {\it Function spaces and potential
%theory}, Springer-Verlag, Berlin, 1996.
%
%\bibitem{AdP1} K. Adimurthi and N. C. Phuc, {\it Quasilinear equations with natural growth in the gradients in spaces of Sobolev multipliers}. Calc.  Var. Partial Differential Equations  {\bf 57} (2018),  57:74.
%
%\bibitem{AdP2} K. Adimurthi and N. C. Phuc, {\it Nonlinear equations with gradient natural growth  and 
%distributional data, with applications to a  Schr\"odinger type equation}, To appear in Journal of the London Mathemaical Society, arXiv:1804.09612. 
%
%
%
%%\bibitem{luigi} L. Ambrosio, N.Fusco, and D. Pallara, {\em Functions of bounded variation and free discontinuity problems,} Oxford University Press, 2000.
%
%\bibitem{ALT} A. Alvino,  P.-L. Lions, G. Trombetti, {\it Comparison results for elliptic and parabolic
%equations via Schwarz symmetrization},  Ann. Inst. H. Poincar\'e Anal. Non Lin\'eaire  {\bf 7}  (1990),  37--65.
%


\bibitem{bebo} P. Benilan, L. Boccardo, T. Gallouet, R. Gariepy, M. Pierre , and J. L. Vazquez,
	{\em An $L^1$ theory of existence and uniqueness of solutions of nonlinear elliptic equations,} Ann. Scuola Norm. Sup. Pisa 
	(IV) {\bf 22} (1995), 241--273.

%\bibitem{BBM} A. Bensoussan, L. Boccardo, and F. Murat,  {\it On a nonlinear partial differential equation having natural growth terms and unbounded solution},  Ann. Inst. H. Poincar\'e Anal. Non Lin\'eaire.  {\bf 5}  (1988),   347--364.
%
%	
%\bibitem{BMMP} M. F.  Betta, A.  Mercaldo, F. Murat,  and M. M. Porzio,  {\it Existence of
%renormalized solutions to nonlinear elliptic equations with a lower-order term
%and right-hand side a measure}, J. Math. Pures Appl. {\bf 80}  (2003), 90--124.
%

\bibitem{BK} S. Bloom and R.  Kerman,  {\em Weighted Orlicz space integral inequalities for the Hardy-Littlewood maximal operator}, Studia Math. {\bf 110} (1994), 149--167. 


\bibitem{BGO} L. Boccardo, T.  Gallou\"et, and L. Orsina, {\it Existence and uniqueness of entropy solutions for nonlinear elliptic equations with measure data}, Ann. Inst. H. Poincar\'e Anal. Non Lin\'eaire {\bf 13} (1996),  539--551. 


%\bibitem{BMP1} L. Boccardo, F. Murat, and J.-P. Puel,  {\it Existence of bounded solutions for nonlinear elliptic unilateral problems}, Ann. Mat. Pura Appl.   {\bf 152}  (1988), 183--196.
%
%\bibitem{BMP2} L. Boccardo, F. Murat, and J.-P. Puel, {\it $L^\infty$ estimate for some nonlinear elliptic partial 
%differential equations and application to an existence result},  SIAM J. Math. Anal.  {\bf 23}  (1992),  326--333.
%
%
%	
\bibitem{VHV} M. F. Bidaut-Veron, M. Garcia-Huidobro, and L. Veron, {\it Remarks on some quasilinear equations with gradient terms and measure data}.  Recent trends in nonlinear partial differential equations. II. Stationary problems, 31--53, Contemp. Math., 595, Amer. Math. Soc., Providence, RI, 2013. 
%
%	
%
%\bibitem{BW1} S.-S. Byun and L. Wang, {\it Elliptic equations with BMO coefficients in Reifenberg domains},  Comm. Pure Appl. Math.  {\bf 57}  (2004), 1283--1310.

\bibitem{BW2} S.-S. Byun and L. Wang, {\it Elliptic equations with BMO nonlinearity in Reifenberg domains},  Adv. Math.  {\bf 219}  (2008), 1937--1971.


%
%\bibitem{CaPe} L. Caffarelli and I. Peral, {\it On $W^{1,p}$   estimates for elliptic equations in divergence form}, Comm. Pure Appl. Math. {\bf 51} (1998), 1--21.
%
%
%\bibitem{CC} K. Cho and H.-J. Choe, {\it Nonlinear degenerate elliptic partial differential equations with critical growth conditions on the gradient}, Proc. Amer. Math. Soc. {\bf 123} (1995), 3789--3796.  


\bibitem {11DMOP} G. Dal Maso, F. Murat, L. Orsina, and A. Prignet, {\em Renormalized solutions of elliptic equations with general measure data},  Ann. Scuola Norm. Super. Pisa (IV) {\bf 28} (1999), 741--808.


%\bibitem{De} F. Della Pietra, {\it Existence results for non-uniformly elliptic equations with general growth in 
%the gradient}, Differential Integral Equations {\bf 21} (2008), 821--836. 







\bibitem{55DuzaMing} F. Duzaar and G. Mingione, {\em Gradient estimates via non-linear potentials},  Amer. J. Math.
 {\bf 133} (2011), 1093--1149. 



\bibitem{Duzamin2} F. Duzaar and G. Mingione, {\em Gradient estimates via linear and nonlinear potentials}, 
J. Funt. Anal. {\bf 259} (2010), 2961--2998.

%\bibitem{Giu}E. Giusti, {\em Direct methods in the calculus of variations}, World Scientic Publishing Co., Inc., River Edge, NJ, 2003.
%
%\bibitem{FMe} V. Ferone and B. Messano, {\it Comparison and existence results for classes of nonlinear elliptic equations
%with general growth in the gradient}, Adv. Nonlinear Stud. {\bf 7} (2007), 31--46.
%
%\bibitem{FMu} V. Ferone and F. Murat, {\it Nonlinear problems having natural growth in the gradient: an existence result when the source terms are small},  Nonlinear Anal.  {\bf 42}  (2000),  1309--1326.
%
%\bibitem{FM3} V. Ferone and F. Murat, {\it Nonlinear elliptic equations with natural growth in the gradient and source terms in Lorentz Spaces}, J. Differential Equations {\bf 256} (2014),  577--608.
%
%
%\bibitem{FPR} V. Ferone, M. R. Posteraro, and J. M. Rakotoson, {\it $L^\infty$-estimates for nonlinear 
%elliptic problems with $p$-growth in the gradient},  J. Inequal. Appl.  {\bf 3}  (1999),  109--125.
%
%
%%\bibitem{FNV} M. Frazier, F. Nazarov, and I. E. Verbitsky, {\it Global estimates for kernels of 
%%Neumann series and Green's functions},  Preprint 2008.
%
%\bibitem{FV} M.  Frazier and I. E. Verbitsky, {\it Positive solutions to Schr\"odinger's equation and the exponential integrability of the balayage}, Preprint 2015, 	arXiv:1509.09005.

\bibitem{FS} M. Fukushima, K. Sato, and S. Taniguchi, {\it On the closable part of pre-Dirichlet forms and the
fine support of the underlying measures}, Osaka J. Math. {\bf 28} (1991), 517--535.

\bibitem{Ga} D. Gallardo, {\em Orlicz spaces for which the Hardy-Littlewood maximal operator is bounded},
Publ. Mat. {\bf 32} (1988), 261--266.

%\bibitem{55Gra} L. Grafakos, {\em Classical and Modern Fourier Analysis}, Pearson Education, Inc., Upper Saddle River, NJ, (2004), xii+931 pp.
%
%
%\bibitem{Gre} N. Grenon, {\it Existence and comparison results for quasilinear elliptic equations with 
%critical growth in the gradient}, J. Differential Equations  {\bf 171}  (2001),  1--23.
%
%\bibitem{GT} N. Grenon and C. Trombetti, {\it Existence results for a class of nonlinear elliptic problems 
%with $p$-growth in the gradient},  Nonlinear Anal.  {\bf 52}  (2003),  931--942.
%
%
%\bibitem{GMP1} N. Grenon, F. Murat, and A. Porretta, {\it Existence and a priori estimate for elliptic problems with subquadratic  gradient dependent terms}, C. R. Math. Acad. Sci. Paris {\bf 342} (2006),  23--28.
%
%\bibitem{GMP2} N. Grenon, F. Murat, and A. Porretta, {\it A priori estimates and existence for elliptic equations with gradient dependent terms},  Ann. Sc. Norm. Super. Pisa Cl. Sci. (5) {\bf 13} (2014),  137--205.
%
%\bibitem{HM} P. Hajlasz and  O. Martio, {\it Traces of Sobolev functions on fractal type sets and characterization of extension domains},
%J. Funct. Anal. {\bf 143}  (1997) 221--246.

\bibitem{HMV} K. Hansson, V. G. Maz'ya, and I. E. Verbitsky, {\it Criteria of solvability for
multidimensional Riccati equations},  Ark. Mat. {\bf 37}  (1999), 87--120.



%\bibitem{Iwa} T. Iwaniec, {\em  Projections onto gradient fields and $L^p$-estimates for degenerated elliptic operators}, %Studia Math. {\bf 75} (1983), 293--312.


%\bibitem{JMV1} B. Jaye, V. G. Maz'ya, and I. E. Verbitsky, {\it Existence and regularity of positive solutions of elliptic equations of Schrödinger type}, J. Anal. Math. {\bf 118} (2012),  577--621.
%
%\bibitem{JMV2} B. Jaye, V. G. Maz'ya, and I. E. Verbitsky, {\it Quasilinear elliptic equations and weighted Sobolev-Poincar\'e inequalities with distributional weights}, Adv. Math. 
%{\bf 232} (2013), 513--542. 
%
%
%
%\bibitem{Jon} P. W. Jones, {\it Quasiconformal mappings and extendability of functions in Sobolev spaces}, Acta Math. {\bf 147} (1981), 71--88.
%
%
%
%
%\bibitem{KT1} C. Kenig and T. Toro, {\it Free boundary regularity for harmonic measures and the Poisson kernel}, Ann.  Math. {\bf 150} (1999), 367--454.
%
%\bibitem{KT2} C. Kenig and T. Toro, {\it Poisson kernel characterization of Reifenberg flat chord arc domains}, Ann. Sci. \'{E}cole Norm. Sup. 
%(4) {\bf 36} (2003), 323--401.
%
%
%\bibitem{KuMi} T. Kuusi and G. Mingione,  {\it Linear potentials in nonlinear potential theory}, 
%Arch. Ration. Mech. Anal. {\bf 207} (2013),  215--246.

\bibitem{KuMi12} T. Kuusi and G. Mingione,{\it  Universal potential estimates.} J. Funct. Anal. {\bf 262} (2012), 4205--4269.

%\bibitem{MV} V. G. Maz'ya and E. I. Verbitsky, {\it Capacitary inequalities for fractional integrals, 
%with applications to partial differential equations and Sobolev multipliers},  Ark. Mat.  {\bf 33}  (1995),  81--115.


\bibitem{55MePh2} T. Mengesha and  N. C. Phuc, {\em Weighted and regularity estimates for nonlinear equations on Reifenberg flat domains},  J. Differential Equations  {\bf 250} (2011), 1485--2507.

\bibitem{55MePh3} T. Mengesha and N. C. Phuc, {\em Quasilinear Riccati type equations with distributional data in Morrey space framework}, J.  Differential Equations {\bf 260} (2016), 5421--5449.

%\bibitem{Me} B. Messano, {\it Symmetrization results for classes of nonlinear elliptic equations with
%$q$-growth in the gradient}, Nonlinear Anal. {\bf 64} (2006), 2688--2703. 



\bibitem{Mey} P.-A. Meyer, {\it Sur le lemme de la Vallée Poussin et un th\'eor\`eme de Bismut}, (French) S\'eminaire de Probabilit\'es, XII (Univ. Strasbourg, Strasbourg, 1976/1977), pp. 770--774, Lecture Notes in Math., {\bf 649}, Springer, Berlin, 1978.

\bibitem{Mi2} G. Mingione, {\em The Calder\'on-Zygmund theory for elliptic problems with measure data}, Ann. Scu. Norm. Sup. Pisa Cl. Sci. (5) {\bf 6} (2007), 195--261.

%\bibitem{55Mi0} G. Mingione, {\em  Gradient estimates below the duality exponent}, Math. Ann. {\bf 346} (2010), 571--627. 
%
%\bibitem{MW} B. Muckenhoupt and R. Wheeden, {\it Weighted norm inequalities for fractional integrals}, Trans. Amer. Math. Soc. {\bf 192} (1974), 261--274. 
%
%\bibitem{PS} A. Porretta and  S. Segura de Le\'on, {\it Nonlinear elliptic equations having a gradient term 
%with natural growth}, J. Math. Pures Appl.   {\bf 85}  (2006),  465--492.
%

 \bibitem{55QH2} Q.-H. Nguyen, {\em Potential estimates and quasilinear parabolic equations with measure data}, Submitted for publication, arXiv:1405.2587v2.


\bibitem{55QH3} Q.-H. Nguyen, {\em Global estimates for quasilinear parabolic equations on Reifenberg flat domains and its applications to  Riccati type parabolic equations with distributional data}, Calc.  Var. Partial Differential Equations  {\bf 54} (2015),  3927--3948.

	\bibitem{QH4} Q.-H. Nguyen and N. C. Phuc, {\em Good-$\lambda$  and Muckenhoupt-Wheeden type bounds in quasilinear measure datum problems, with applications},  Math. Ann. {\bf 374} (2019), 67--98.
	
\bibitem{Ph1} N. C. Phuc, {\it Quasilinear Riccati type equations with super-critical exponents,} Comm. Partial Differential Equations \textbf{35} (2010), 1958--1981.

\bibitem{Ph1-1} N. C. Phuc, {\it Erratum to:  Quasilinear Riccati type equations with super-critical exponents},   Comm. Partial Differential Equations {\bf 42} (2017), 1335--1341. 


%\bibitem{55Ph0} N. C. Phuc, {\em Global integral gradient bounds for quasilinear equations below or 
%near the natural exponent},  Ark. Mat. {\bf 52} (2014), 329--354.

\bibitem{55Ph2} N. C. Phuc, {\em Nonlinear Muckenhoupt-Wheeden type bounds on Reifenberg flat domains, with applications to quasilinear Riccati type equations,} Adv.  Math. {\bf 250} (2014), 387--419.

\bibitem{55Ph2-2} N. C. Phuc, {\em Corrigendum to: Nonlinear Muckenhoupt-Wheeden type bounds on Reifenberg flat domains, with applications to quasilinear Riccati type equations},
 Adv.  Math. {\bf 328} (2018),  1353--1359.


%\bibitem{Ph3} N. C. Phuc, {\it Morrey global bounds and quasilinear Riccati type equations below the natural exponent},
% J. Math. Pures Appl.  {\bf 102} (2014),  99--123. 


\bibitem{55Re} E. Reifenberg, {\em Solutions of the Plateau Problem for $m$-dimensional surfaces of varying topological type}, Acta Math. {\bf 104} (1960), 1--92.

\bibitem{Sa} D. Sarason, {\it Functions of vanishing mean oscillation}, Trans. Amer. Math. Soc.
{\bf 207} (1975), 391--405.

\bibitem{Tor} T. Toro, {\it Doubling and flatness: geometry of measures}, Notices Amer. Math. Soc. {\bf 44} (1997) 1087--1094.

%\bibitem{Wa} L. Wang,  {\it A geometric approach to the Caldero\'n-Zygmund estimates}, Acta Math. Sin. (Engl.
%Ser.) {\bf 19} (2003), 381--396.

%\bibitem{55Stein2} E.M. Stein, {\em Singular integrals and differentiability properties of functions}, Princeton
%Mathematical Series, {\bf 30}, Princeton University Press, Princeton, (1970).

%\bibitem{55Stein3} E.M. Stein, {\em Harmonic analysis: real-variable methods, orthogonality, and oscillatory integral},  {\bf %43}, Princeton University Press, Princeton, (1993).

\end{thebibliography}
\end{document}